\documentclass[12pt,a4paper]{article}
\usepackage[utf8]{inputenc}
\usepackage{amsmath}
\usepackage{amsfonts}
\usepackage{amssymb}
\usepackage{amsthm}
\usepackage{float}
\usepackage{hyperref}
\usepackage[left=2cm,right=2cm,top=2cm,bottom=2cm]{geometry}
\usepackage{xcolor}
\usepackage[all,cmtip]{xy}
\usepackage{tikz}
\usepackage{tikz-cd}
\usetikzlibrary{decorations.markings}
\usepackage{enumerate}
\usepackage[normalem]{ulem}

\newcommand{\Hom}{\mathrm{Hom}}

\newcommand{\Th}{\mathrm{Th}}

\newcommand{\rk}{\mathrm{rk}}

\newcommand{\im}{\mathrm{im}\,}

\newcommand{\RR}{\mathbb{R}}
\newcommand{\QQ}{\mathbb{Q}}
\newcommand{\CC}{\mathbb{C}}
\newcommand{\ZZ}{\mathbb{Z}}

\newtheorem{thm}{Theorem}[section]
\newtheorem{prop}[thm]{Proposition}
\newtheorem{cor}[thm]{Corollary}
\newtheorem{lem}[thm]{Lemma}

\theoremstyle{definition}
\newtheorem{rem}[thm]{Remark}
\newtheorem{defn}[thm]{Definition}
\newtheorem{ex}[thm]{Example}

\begin{document}

\title{On orientability, Poincaré duality, and connectivity of GKM graphs}
\author{Oliver Goertsches\footnote{Philipps-Universit\"at Marburg, email:
goertsch@mathematik.uni-marburg.de}, Panagiotis Konstantis\footnote{Philipps-Universität Marburg,
email: pako@mathematik.uni-marburg.de}, and Leopold
Zoller\footnote{Universitat de Barcelona, email: leopold.zoller@ub.edu}}

\maketitle

\begin{abstract} 
We investigate a combinatorial notion of orientability for abstract GKM graphs and its connections to graph cohomology in the sense of Guillemin--Zara. In particular, we prove that orientability of the GKM graph is equivalent to Poincaré duality of the rational (non-equivariant) graph cohomology algebra. As an application, we prove that orientable GKM graphs remain connected after removing any single vertex.
\end{abstract}

\section{Introduction}
A GKM manifold is a compact simply-connected manifold $M$ together with the action of a compact torus $T$, such that the orbits of dimension $\leq 1$ form a graph of $2$-spheres, glued together at the fixed points. This graph can, as a $T$-space, be encoded in a purely combinatorial labeled graph, the so-called GKM graph $(\Gamma,\alpha)$. Furthermore one asks for odd cohomology of a GKM manifold to vanish (say with rational coefficients), as in this case Goresky-Kottwitz-MacPherson observed in \cite{MR1489894} that the GKM graph encodes the cohomology ring of the manifold through a relatively simple algebraic formula. This rather flexible geometric setup applies to many classical examples such as toric manifolds or certain compact homogeneous spaces.

While geometry provides the key motivation for GKM theory, in this paper we are working purely on the combinatorial side. In \cite{GuilleminZaraEquivariantdeRham, GuilleminZara1Skeleta} the notion of an \emph{abstract GKM graph} was introduced as certain labeled graph (see Definition \ref{defn: abstract GKM graph}) and the combinatorial objects were studied independently from geometry. In particular the aforementioned formula for the cohomology leads to the definition of the equivariant (resp.\ non-equivariant) graph cohomology $H_T^*(\Gamma)$ (resp.\ $H^*(\Gamma)$) for an abstract GKM graph $(\Gamma,\alpha)$. In case $(\Gamma,\alpha)$ is the GKM graph of a GKM manifold $M$, the graph cohomology $H_T^*(\Gamma)$ (resp.\ $H^*(\Gamma)$) agrees with $H_T^*(M)$ (resp.\ $H^*(M)$).

One possible viewpoint that makes the independent study of abstract GKM graphs interesting is the question of realizability: while any GKM manifold has a GKM graph associated to it, it is not known which abstract GKM graphs come from GKM manifolds. Or in other words, one may ask for a list of combinatorial conditions on an abstract GKM graph that are equivalent to being realizable by a manifold. Studying the combinatorial object and comparing it with the behaviour in the geometric setup of a GKM manifold in particular serves to answer the question whether the combinatorial object is ``the right one''.

In \cite{2210.01856v1} it was shown that a 3-valent abstract GKM graph $(\Gamma,\alpha)$ with labels in $\ZZ^2$ is realizable (by a $6$-dimensional simply-connected rational $T^2$-GKM manifold) if and only if $H^*(\Gamma)$ satisfies Poincaré duality. The Poincaré duality condition is evidently necessary as for any realization $M$ we will have $H^*(\Gamma)\cong H^*(M)$. However, there is another more fundamental property, called \emph{orientability} of the GKM graph (see Definition \ref{D: path orientability}) which shows up in the initial stages of the construction of the geometric realization. It is designed to allow for the construction of a thickening of the graph of $2$-spheres (corresponding to the GKM graph) to a $T$-manifold with boundary which is orientable in the geometric sense.
In \cite[Corollary 2.8]{2210.01856v1} it was shown, in the setting of $3$-valent $T^2$-GKM graphs, that the Poincaré duality condition implies orientability. As Poincaré duality was needed in later stages of the realization process, it thus remained as the only necessary assumption for realizability.  

The first goal of the present article is to study {this} notion of orientability and explore its relations to graph cohomology. Our first main result is

\begin{thm}\label{thm:orientabilities}
	Let $\Gamma$ be a $d$-valent abstract $T^k$-GKM graph such that $H^*_T(\Gamma)$ is a free $H^*(BT)$-module. Then the following conditions are equivalent:
	\begin{enumerate}
		\item $\Gamma$ is orientable.
		\item $H^{2d}(\Gamma)\neq 0$.
		\item $\dim H^{2d}(\Gamma)=1$ and $H^{2d}(\Gamma)$ is generated by the Thom class of any vertex of $\Gamma$.
		\item $H^*(\Gamma)$ satisfies $2d$-dimensional Poincaré duality.
	\end{enumerate}
\end{thm}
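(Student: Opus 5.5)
We will prove (1)$\Rightarrow$(3)$\Rightarrow$(2)$\Rightarrow$(1) and (3)$\Leftrightarrow$(4), using throughout that freeness of $H_T^*(\Gamma)$ gives $H^*(\Gamma)=H_T^*(\Gamma)/H^{>0}(BT)H_T^*(\Gamma)$. The main computational tool is the localization (ABBV-type) map
\[
\int_\Gamma\colon H_T^*(\Gamma)\to \mathrm{Frac}(H^*(BT)),\qquad f\mapsto \sum_{v}\frac{\epsilon_v f(v)}{e(v)},
\]
where $e(v)=\prod_{e\ni v}\alpha(e)$ and $\epsilon_v\in\{\pm1\}$ is a choice of signs. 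Orientability (Definition \ref{D: path orientability}) should provide exactly a sign function $\epsilon$ compatible along every edge: for an edge $e=(v,w)$, the labels at $v$ and $w$ agree modulo $\alpha(e)$ up to the connection, and the signs are consistent.

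For (1)$\Rightarrow$(3), we first show that with such $\epsilon$ the integral takes values in $H^*(BT)$. For each label $\alpha$, group the vertices by edges with label $\pm\alpha$. The two terms at the endpoints of such an edge have residues along $\alpha=0$ that cancel, because $f(v)\equiv f(w)$ mod $\alpha$ and $\epsilon_v e(v)/\alpha \equiv -\epsilon_w e(w)/\alpha$ mod $\alpha$. This is precisely what orientability encodes. Hence $\int_\Gamma$ has no poles and is a degree $-2d$ map $H_T^*\to H^*(BT)$, so it descends to $H^{2d}(\Gamma)\to\QQ$.

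The Thom class $\tau_v$ of a vertex $v$ is supported at $v$ with $\tau_v(v)=e(v)$, and it satisfies $\int_\Gamma\tau_v=\pm1\neq0$. So $H^{2d}(\Gamma)\neq0$, and $\tau_v$ is nonzero there. For $\dim H^{2d}=1$ we use the rank count: freeness implies the Poincaré series of $H^*(\Gamma)$ is $\sum_v t^{2\lambda(v)}$ for an index function $\lambda$ with respect to a generic direction (the Guillemin–Zara Betti number result, assumed from earlier). Exactly one vertex has index $d$, which gives $\dim H^{2d}(\Gamma)\le1$, hence equality. The implication (3)$\Rightarrow$(2) is trivial.

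For (2)$\Rightarrow$(1), suppose $H^{2d}\neq0$ and pick $f\in H_T^{2d}$ with nonzero image. The main obstacle is extracting signs from $f$. My plan is to use the freeness/GKM exactness (the Chang–Skjelbred sequence) to write every degree-$2d$ class modulo $H^{>0}(BT)H_T^*$ as a combination of Thom classes. Then I would show the relation $\tau_v\equiv \pm\tau_w$ in $H^{2d}(\Gamma)$ for adjacent $v,w$, via the class supported on the edge $e$ with value $e(v)/\alpha(e)$ at $v$ and the corresponding value at $w$, multiplied by $\alpha(e)$. The sign in $\tau_v=\epsilon\,\tau_w$ is determined by whether the edge is orientation-compatible. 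Going around a cycle, if $\Gamma$ is non-orientable, one gets $\tau_v=-\tau_v$, so all $\tau_v$ vanish and $H^{2d}=0$, a contradiction.

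(3)$\Rightarrow$(4): the pairing $H^i\times H^{2d-i}\to H^{2d}\cong\QQ$ is nondegenerate. To see this, lift to the equivariant setting, where $\int_\Gamma$ gives an $H^*(BT)$-bilinear form on the free module $H_T^*$. Using the equivariant Morse-type basis of flow-up classes $\tau_v^+$ and the dual basis of flow-down classes, the pairing matrix is triangular with unit diagonal, so it is unimodular and stays nondegenerate after reduction. (4)$\Rightarrow$(2) is immediate.
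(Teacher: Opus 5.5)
Your proposal has genuine gaps at the two nontrivial points of the theorem.

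\textbf{The upper bound and Poincaré duality.} The bound $\dim H^{2d}(\Gamma)\le 1$ in (1)$\Rightarrow$(3), and the unimodularity argument in (3)$\Rightarrow$(4), rely on Morse-theoretic input that is not available here. That input is a Poincaré series $\sum_v t^{2\lambda(v)}$ together with a basis of flow-up classes. For an unsigned graph, $\lambda(v)$ is not even well defined, since it depends on the arbitrary lift $\tilde\alpha$. Even for signed graphs with free $H_T^*$ the claim is false.

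Take the GKM graph of $S^6$: two vertices joined by three edges labelled $e_1,e_2,e_3$ with $e_1+e_2+e_3=0$. The weights are $e_i$ at one vertex and $-e_i$ at the other. Here $H_T^*(\Gamma)=R\cdot 1\oplus R\cdot(0,e_1e_2e_3)$, so $H^*(\Gamma)$ has Poincaré polynomial $1+t^6$. But for every generic $\xi$ the two indices are $1$ and $2$. So no vertex has index $d=3$, and no flow-up basis exists.

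The missing idea is an intrinsic duality argument. The paper proves equivariant Poincaré duality $H_T^*(G)\cong\Hom_R(H_{T,c}^*(G),R)$ for every subgraph $G$ of an orientable $\Gamma$. It does so using compactly supported graph cohomology and a Mayer--Vietoris induction that adds one edge at a time, with single vertices and single edges as base cases. Freeness turns this into nonequivariant duality (Corollary \ref{cor:nonequivPD}). That gives (4) and $\dim H^{2d}(\Gamma)=\dim H^0(\Gamma)=1$ at once.

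\textbf{The direction (2)$\Rightarrow$(1).} Your relation $\Th_v\equiv\pm\eta(e)\Th_w$ modulo $R^+H_T^*(\Gamma)$, obtained from $\tilde\alpha(e)\Th_e$, is correct; this is \cite[Proposition 2.23]{2210.01856v1}. The problem is the assertion that every class in $H^{2d}(\Gamma)$ is a combination of vertex Thom classes. That is the whole difficulty, and the GKM description gives no such statement, since for graphs it is just the definition of $H_T^*(\Gamma)$.

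The paper avoids this with the orientation double cover $\pi\colon\tilde\Gamma\to\Gamma$, which is orientable.
\begin{itemize}
\item Integration on $\tilde\Gamma$ vanishes on $\pi^*H_T^*(\Gamma)$. So equivariant duality on $\tilde\Gamma$ splits along the eigenspace decomposition $H_T^*(\tilde\Gamma)=\pi^*H_T^*(\Gamma)\oplus N$ of the deck involution, and yields $N\cong\Hom_R(H_T^*(\Gamma),R)$. Hence $H_T^*(\tilde\Gamma)$ is free.
\item The orientable case then gives $H^{2d}(\tilde\Gamma)=\QQ\cdot\Th_{(v,1)}$.
\item Write $\Th_{(v,1)}=\tfrac12\pi^*\Th_v+\tfrac12(\Th_{(v,1)}-\Th_{(v,-1)})$. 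The first summand vanishes modulo $R^+$ by the Thom-class relation.
\item So the one-dimensional space $H^{2d}(\tilde\Gamma)=H^{2d}(\Gamma)\oplus(N/R^+N)^{2d}$ is spanned by an element of the second summand. This forces $H^{2d}(\Gamma)=0$.
\end{itemize}
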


	As an immediate consequence we can rewrite the conditions from \cite[Theorem 1.1]{2210.01856v1} for the geometric realization of $3$-valent
$T^{2}$-graphs in noncohomological terms. We refer to \cite{2210.01856v1} for the detailed geometric definitions in the statement, which are not covered here.

\begin{cor}\label{C: realization without pd}
A $3$-valent abstract $T^{2}$-GKM graph is realizable by a $6$-dimensional simply-connected rational $T^{2}$-GKM
manifold if and only if it is orientable.
\end{cor}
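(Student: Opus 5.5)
The plan is to deduce this from \cite[Theorem 1.1]{2210.01856v1} combined with Theorem \ref{thm:orientabilities}. By \cite[Theorem 1.1]{2210.01856v1}, a $3$-valent abstract $T^2$-GKM graph $\Gamma$ is realizable by a $6$-dimensional simply-connected rational $T^2$-GKM manifold if and only if $H^*(\Gamma)$ satisfies Poincaré duality. So it suffices to show that, for such graphs, Poincaré duality of $H^*(\Gamma)$ is equivalent to orientability of $\Gamma$.

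One direction is already known: \cite[Corollary 2.8]{2210.01856v1} shows that Poincaré duality implies orientability. It also follows from (4)$\Rightarrow$(1) of Theorem \ref{thm:orientabilities}, once the freeness hypothesis is available.

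For the converse we want to apply (1)$\Rightarrow$(4) of Theorem \ref{thm:orientabilities} with $d=3$. This requires $H_T^*(\Gamma)$ to be a free $H^*(BT)$-module. I would establish freeness in the $T^2$ setting as follows.

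\begin{itemize}
\item Since the labels lie in $\ZZ^2$ and the graph is a $T^2$-GKM graph, adjacent labels at each vertex are pairwise linearly independent. The equivariant graph cohomology is then the module of sections of a sheaf-like object over a $2$-dimensional polynomial ring $\QQ[u,v]$.
\item I expect to show that $H_T^*(\Gamma)$ is reflexive, being an intersection of free modules cut out by divisibility conditions along edges.
\item Over a polynomial ring in two variables, reflexive finitely generated graded modules are free, since they have depth $2$ and Auslander–Buchsbaum applies.
\end{itemize}

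Alternatively, this freeness for $T^2$-graphs with pairwise independent labels may already be recorded in \cite{2210.01856v1} or in the paper's preliminaries, in which case I would simply cite it.

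With freeness in hand, orientability gives Poincaré duality, and the corollary follows. The main obstacle is this freeness check, specifically verifying that $H_T^*(\Gamma)$ is reflexive, that is, equal to the intersection of its localizations at height-one primes. I would handle it by writing $H_T^*(\Gamma)$ as the kernel of a map $\bigoplus_{v}\QQ[u,v]\to\bigoplus_{e}\QQ[u,v]/(\alpha(e))$. A second syzygy-type argument then gives depth $\geq 2$.
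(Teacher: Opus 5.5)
Your proposal is correct and follows essentially the same route as the paper: combine the realization criterion of \cite[Theorem 1.1]{2210.01856v1} (realizable if and only if $H^*(\Gamma)$ satisfies Poincaré duality) with the equivalence (1)$\Leftrightarrow$(4) of Theorem \ref{thm:orientabilities}, whose freeness hypothesis holds automatically for $T^2$-labelings. Your depth argument via the kernel of $\bigoplus_v R\to\bigoplus_e R/(\alpha(e))$ is exactly the argument of Remark \ref{rem:cohomfreedimT=2} (which cites \cite[Proposition 2.25]{2210.01856v1}), so the step you flag as the main obstacle is already available and can simply be cited.
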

Indeed, by \cite[Proposition
2.25]{2210.01856v1} (cf. Remark 4.2) the equivariant graph cohomology of a $T^k$-GKM graph is automatically free for $k=2$, so Theorem \ref{thm:orientabilities} lets us replace the aforementioned Poincaré duality condition by orientability. For $k\geq 3$, the freeness condition in Theorem \ref{thm:orientabilities} is necessary for the theorem to hold (see Example \ref{ex: orientable non-free}) and also a necessary condition for realizability by any GKM manifold $M$ as $H_T^*(M)$ is a free $H^*(BT)$-module. The fact that top nonequivariant graph cohomology $H^{2d}(\Gamma)$ is either zero- or one-dimensional was previously proved under more restrictive assumptions in \cite[Proposition 4.22]{Luo} (see Remarks \ref{rem:Luo1} and \ref{rem:Luoassumptions}). Furthermore, the fact that the Thom class of a vertex is nontrivial in nonequivariant graph cohomology follows from results of \cite{GuilleminZaraEquivariantdeRham} in the more restrictive case of a signed GKM graph (see Remarks \ref{rem: signed} and \ref{rem: signed orientable}).

The proof of Theorem \ref{thm:orientabilities} involves the introduction of some concepts that we consider to be interesting in their own right. To prove the implication ``$1.\Rightarrow 4.$'' we introduce for any subgraph $G\subset \Gamma$ equivariant graph cohomology $H^*_T(G)$ and equivariant graph cohomology with compact support $H^*_{T,c}(G)$ (see Section \ref{sec:PD}). The ABBV formula introduces a well-defined integration on graph cohomology with values in $H^*(BT)=:R$ (Theorem \ref{thm: ABBV}) and we obtain a form of Poincaré duality on equivariant graph cohomology (see Theorem \ref{thm:PD}):
\begin{thm}
Let $\Gamma$ be an orientable GKM graph and $G\subset \Gamma$ any subgraph. Then \[
H^{*}_T(G)\longrightarrow \Hom_R(H^{*}_{T,c}(G),R);\qquad \alpha\mapsto (\beta\mapsto \int (\alpha\cdot \beta)),
\]
is an isomorphism.
\end{thm}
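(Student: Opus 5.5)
We have to guess the definitions. The natural setup: $H^*_T(G)$ is the set of tuples $(f_v)_{v\in V(G)}$ with $f_v\in R$ satisfying the GKM condition along the edges of $G$. $H^*_{T,c}(G)$ is presumably the subset of $H^*_T(\Gamma)$ of classes supported on the vertices of $G$, i.e.\ vanishing at all vertices outside $G$. Integration is given by the ABBV formula
\[
\int f=\sum_v \frac{f_v}{e_v},
\]
where $e_v=\pm\prod \alpha(e)$ is the product of the edge labels at $v$. The sign is fixed using the orientation, and Theorem~\ref{thm: ABBV} guarantees the result lies in $R$. Since $\alpha\cdot\beta$ is then a compactly supported class on $\Gamma$, the pairing is well defined.

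The plan is to reduce everything to a local statement at each vertex. Localize at $S=R\setminus\{0\}$, or work over the fraction field $Q$.

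\textbf{Injectivity.} Suppose $\alpha\neq0$, say $\alpha_v\neq0$ for some $v\in G$. Take $\beta=\tau_v$, the Thom class of $v$: it is supported at $v$ with $(\tau_v)_v=e_v$. Concretely, $\tau_v(w)=0$ for $w\neq v$, and the GKM conditions hold because $e_v$ is divisible by every label at $v$. Then $\int\alpha\tau_v=\pm\alpha_v\neq0$.

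\textbf{Surjectivity.} Let $\phi\colon H^*_{T,c}(G)\to R$ be $R$-linear, and define $\alpha_v:=\pm\phi(\tau_v)\in R$. The candidate preimage is $\alpha=(\alpha_v)$. There are two things to prove.
\begin{enumerate}[(a)]
\item The functional $\phi$ agrees with $\beta\mapsto\int\alpha\beta$ on all of $H^*_{T,c}(G)$, not only on the Thom classes.
\item The tuple $\alpha$ satisfies the GKM condition across every edge $e=vw$ of $G$.
\end{enumerate}

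For (a), observe that $\prod_v e_v\cdot\beta$ is an $R$-combination of Thom classes, namely $\sum_v \beta_v\frac{\prod_u e_u}{e_v}\tau_v$. Since $R$ is a domain and $\phi$ is linear, $\phi(\beta)$ is then determined and equals $\int\alpha\beta$.

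For (b), the key step and the main obstacle, I need a compactly supported class $\beta$ supported on $\{v,w\}$ with $\beta_v=e_v/\alpha(e)$ and $\beta_w=\pm e_w/\alpha(e)$. This is a "Thom class of the edge": it uses the labels of edges at $v$ other than $e$ and their transport via the connection along $e$. For the GKM condition along $e$ one needs $\beta_v\equiv\beta_w$ modulo $\alpha(e)$, which follows from the connection compatibility $\alpha(\nabla_e e')\equiv\alpha(e')$ modulo $\alpha(e)$. This is exactly where orientability enters: the products of the transported labels agree with the correct sign. For the other edges at $v$, the condition holds because $\beta_v$ is divisible by all the other labels at $v$.

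Then
\[
\phi(\beta)=\int\alpha\beta=\frac{\alpha_v\pm\alpha_w}{\alpha(e)}\in R,
\]
with the signs matching by orientation. Hence $\alpha(e)\mid\alpha_v-\alpha_w$, which is the GKM condition. I expect the care to lie in tracking the orientation signs (and the parity of sign of $e_v$ via the orientation) so that these combinations come out consistently.
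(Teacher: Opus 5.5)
Your argument is correct in substance and takes a genuinely different route from the paper's. However, it rests on a wrong guess for $H^*_{T,c}(G)$, and with that guess your claim that ``the pairing is well defined'' is false.

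\textbf{The definition.} The paper defines $H^*_{T,c}(G)$ as the set of $f\in H^*_T(G)$ with $\alpha(e)\mid f_{i(e)}$ for \emph{every} edge $e$ with $i(e)\in V(G)$ and $e\notin E(G)$. This includes edges of $\Gamma$ that join two vertices of $G$ without belonging to $G$. The paper explicitly remarks that this is in general not the subring of $H^*_T(\Gamma)$ supported on $V(G)$; the two agree for induced subgraphs. Non-induced subgraphs are covered by the theorem and occur in the paper's own induction.

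For example, let $G$ consist of two vertices $v,w$ joined in $\Gamma$ by an edge $e\notin E(G)$. Under your definition, $\mathrm{Th}_e\in H^*_{T,c}(G)$ and $1_v\in H^*_T(G)$. But $1_v\cdot\mathrm{Th}_e\notin H^*_T(\Gamma)$, and $\int 1_v\cdot\mathrm{Th}_e=\pm 1/\tilde\alpha(e)\notin R$.

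\textbf{With the paper's definition everything goes through.} The product $\alpha\cdot\beta$, extended by zero, does lie in $H^*_T(\Gamma)$, and the classes you need are available:
\begin{enumerate}[(i)]
\item $\mathrm{Th}_v$ (your $\tau_v$) lies in $H^*_{T,c}(G)$ for $v\in V(G)$.
\item $\mathrm{Th}_e$ lies in $H^*_{T,c}(G)$ for $e\in E(G)$, since $(\mathrm{Th}_e)_{i(e)}$ is divisible by every label at $i(e)$ other than $\alpha(e)$, and likewise at $t(e)$.
\end{enumerate}
Injectivity, step (a) and step (b) then work exactly as you wrote them.

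\textbf{On signs.} $\mathrm{Th}_e$, with the sign $-\eta(e)$ at $t(e)$, satisfies the GKM condition in any GKM graph, orientable or not. Orientability enters only through the well-defined sign function $\Psi$, via $\Psi(t(e))=\eta(e)\Psi(i(e))$. This gives $\int\alpha\cdot\mathrm{Th}_e=\Psi(i(e))\,(\alpha_{i(e)}-\alpha_{t(e)})/\tilde\alpha(e)$.

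\textbf{Comparison of routes.} The paper argues by Mayer--Vietoris:
\begin{enumerate}[(i)]
\item It shows exactness of $0\to H^*_T(G)\to H^*_T(U)\oplus H^*_T(W)\to H^*_T(U\cap W)$, and of the compactly supported sequence up to a torsion cokernel, and then dualizes.
\item It checks $D_G$ by explicit bases on discrete vertex sets ($1_{v_i}$ against $\mathrm{Th}_{v_i}$) and on a single edge ($1,g$ against $\mathrm{Th}_v,\mathrm{Th}_e$).
\item It builds $G$ one edge at a time and concludes by a diagram chase.
\end{enumerate}
Your proof is global:
\begin{enumerate}[(i)]
\item Vertex Thom classes give injectivity and reconstruct $\alpha$ vertexwise.
\item The identity $\left(\prod_u e_u\right)\beta=\sum_v\beta_v\frac{\prod_u e_u}{e_v}\mathrm{Th}_v$, together with torsion-freeness of $R$, replaces the torsion-cokernel argument.
\item Edge Thom classes replace the single-edge brick.
\end{enumerate}
So the local ingredients are the same, but you avoid the induction and the diagram chase entirely. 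In return, the paper's route produces the Mayer--Vietoris sequences themselves, which it reuses in the proof of Theorem~\ref{thm:connectivity}. It also mirrors the classical proof of Poincaré duality for manifolds.
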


The proof is an inductive procedure using Mayer-Vietoris sequences for graph cohomology and is much inspired by the proof of classical Poincaré duality for orientable manifolds. The above result on equivariant graph cohomology then specializes to the implication ``$1.\Rightarrow 4.$'' on non-equivariant cohomology as shown in Corollary \ref{cor:nonequivPD}.
For the implication ``$2.\Rightarrow 1.$'' we introduce the concept of an orientability covering of a GKM graph and study its properties.

As an application of the techniques developed surrounding the proof of Poincaré duality we furthermore obtain connectivity results on orientable GKM graphs (see Theorem \ref{thm:connectivity}).

\begin{thm}\label{thm: intro connectivity}
Any orientable $d$-valent GKM graph $(\Gamma,\alpha)$ remains connected after removing any one vertex with its adjacent edges. If $d\geq 2$, it also remains connected after removing any one edge.
\end{thm}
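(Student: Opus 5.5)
We argue by contradiction, using the equivariant Poincaré duality of Theorem \ref{thm:PD} for subgraphs together with the Mayer--Vietoris sequences of Section \ref{sec:PD}. Suppose that removing a vertex $v$ (with its adjacent edges) disconnects $\Gamma$. Then the vertex set other than $v$ splits into two nonempty pieces $A$ and $B$ with no edges between them. Let $G_A$ be the full subgraph on $A\cup\{v\}$ and $G_B$ the full subgraph on $B\cup\{v\}$. Then $\Gamma=G_A\cup G_B$ and $G_A\cap G_B=\{v\}$ is a single vertex with no edges.

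The plan is to compare degrees. Since $\Gamma$ is $d$-valent and orientable, integration in the sense of Theorem \ref{thm: ABBV} lowers degree by $2d$. For a subgraph $G$, the compactly supported classes are those vanishing near the ``boundary'' of $G$. I will compute $H^*_{T,c}(\{v\})$ for the single-vertex subgraph: in the paper's setup, compact support at an isolated vertex should force the class to be a multiple of the Thom class $\tau_v$, i.e.\ of the product $e_v$ of the $d$ weights at $v$. This gives $H^*_{T,c}(\{v\})\cong R\cdot\tau_v$, concentrated in degree $\geq 2d$, which is consistent with duality because $H^*_T(\{v\})=R$ and $\int \tau_v = 1$.

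The key step is the Mayer--Vietoris sequence for compact supports,
\[
\cdots\to H^*_{T,c}(\{v\})\to H^*_{T,c}(G_A)\oplus H^*_{T,c}(G_B)\to H^*_{T,c}(\Gamma)\to\cdots,
\]
or equivalently, after applying duality, the sequence in ordinary equivariant cohomology. Consider the restriction $H^0_T(\Gamma)\to H^0_T(G_A)\oplus H^0_T(G_B)\to H^0_T(\{v\})$. I would instead construct the offending class directly. Take the class $\alpha$ that equals $1$ on $A$, $0$ on $B$, and at $v$ equals some value $f_v$. For the GKM conditions, along edges from $v$ into $A$ with weight $\alpha_e$ we need $f_v\equiv 1$, and along edges into $B$ we need $f_v\equiv 0$. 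Gluing is therefore obstructed, but the obstruction only involves edges at $v$. The cleaner approach is duality: the function $\chi_A$ (equal to $1$ on $A$, $0$ elsewhere) should define a compactly supported class on $\Gamma\setminus\{v\}$, and extension by zero together with $\int$ should give a contradiction. Concretely, pair $\chi_A$ against the unit, which by orientability gives $\int(\text{a degree-}0\text{ class}) = $ a degree $-2d$ element, hence $0$. Exactness combined with the one-dimensionality of top cohomology, item 3 of Theorem \ref{thm:orientabilities}, then forces the Thom class of $v$ to be represented on each side. This yields two independent top classes, contradicting $\dim H^{2d}(\Gamma)=1$.

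The main obstacle is making this last step precise. I expect the right formulation to be: the localization of $\tau_v$ can be split as $\tau_v=\sigma_A+\sigma_B$, with $\sigma_A$ and $\sigma_B$ supported on $G_A$ and $G_B$ respectively, because the edges at $v$ partition into those going to $A$ and those going to $B$. Each $\sigma$ then has a nonzero integral computed via ABBV, whereas the top nonequivariant cohomology is spanned by any vertex's Thom class; comparing with the Thom class of a vertex in $A$ versus one in $B$ would give the contradiction. For the edge statement with $d\ge2$, I would run the same argument with the intersection being an edge $e$: subdivide, or treat $G_A\cap G_B$ as the edge subgraph, whose compactly supported cohomology is computed analogously via Thom classes of the two endpoints.
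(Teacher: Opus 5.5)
\textbf{There is a genuine gap.} Your setup is the right one: $\Gamma=G_A\cup G_B$ with $G_A\cap G_B=\{v\}$, Mayer--Vietoris, and Theorem~\ref{thm:PD} on subgraphs. However, the step that should produce the contradiction is missing, and the contradiction you aim for points the wrong way.
\begin{enumerate}
\item $\chi_A$ is not a compactly supported class on $\Gamma\setminus\{v\}$. At a vertex $a\in A$ adjacent to $v$ it equals $1$, which is not divisible by the label of the edge $av$.
\item Pairing a degree-$0$ class with the unit vanishes for degree reasons and carries no information.
\item The splitting $\tau_v=\sigma_A+\sigma_B$ carries no content. Since $\tau_v$ is supported at $v\in G_A\cap G_B$, the choice $\sigma_A=\tau_v$, $\sigma_B=0$ is already such a splitting, and nothing forces both integrals to be nonzero.
\item Even with classes on each side having nonzero integral (e.g.\ $\Th_a,\Th_b$ for $a\in A$, $b\in B$), there is no contradiction. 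All vertex Thom classes are proportional in the one-dimensional $H^{2d}(\Gamma)$.
\end{enumerate}
The ``wedge of two closed manifolds, hence two top classes'' intuition is misleading. $G_A$ and $G_B$ behave like manifolds with boundary, since each misses edges at $v$. The Mayer--Vietoris/duality argument actually shows that every degree-$2d$ class of $\Gamma$ lies in $R^+H^*_T(\Gamma)$. So the contradiction must be with the nonvanishing of $\Th_v$ in $H^{2d}(\Gamma)$ (Corollary~\ref{cor:Thomnonzero}), not with $\dim H^{2d}(\Gamma)\le 1$.

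The missing argument, none of which appears in your proposal (in particular not the freeness/splitting step), runs as follows.
\begin{enumerate}
\item[(i)] The sequence $0\to H^*_T(\Gamma)\to H^*_T(G_A)\oplus H^*_T(G_B)\to H^*_T(\{v\})\to 0$ from Proposition~\ref{prop:exactcptandnot} is short exact. Indeed $H^*_T(\{v\})=R$ is generated by $1$, which is the image of $(1,0)$.
\item[(ii)] Project the labels to a $T^2$ keeping the labels at each vertex pairwise independent. This changes neither the graph, the connection, nor the signs $\varepsilon_i$, so orientability is preserved. All modules are then free by Remark~\ref{rem:cohomfreedimT=2}, so the sequence splits and $H^*_T(G_A)\oplus H^*_T(G_B)\cong H^*_T(\Gamma)\oplus R$ as graded free modules.
\item[(iii)] $G_A$ is connected and misses the edges from $v$ into $B$. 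A degree-$0$ class in $H^*_{T,c}(G_A)$ is constant and must be divisible by such a label, so it vanishes. Since $D_{G_A}$ is an isomorphism of degree $-2d$, $H^*_T(G_A)$ has no free generator in degree $2d$. The same holds for $G_B$.
\item[(iv)] But $\Th_v\notin R^+H^*_T(\Gamma)$, so $H^*_T(\Gamma)$ does have a free generator in degree $2d$. This contradicts (ii).
\end{enumerate}

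For the edge statement, ``subdividing'' makes no sense for GKM graphs. You can either use the combinatorial Lemma~\ref{lem:2vertex2edge}, or run the same argument with $U\cap W$ the edge $e=vw$. In the latter case, surjectivity onto $H^*_T(e)$ uses that the class equal to $0$ at $v$ and $\alpha(e)$ at $w$ extends by zero over the side containing $v$, in which $w$ is a leaf. The hypothesis $d\geq 2$ then guarantees that both sides miss edges, so step (iii) applies. Note also that $H^*_{T,c}(e)$ is generated by $\Th_v$ and the edge class $\Th_e$, not by the two vertex Thom classes.
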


Related connectivity results were obtained by Luo \cite{Luo} under substantially stronger
assumptions. The hypotheses in \cite{Luo} imply stronger connectivity properties,which are in general not fulfilled by orientable GKM graphs (see Example \ref{ex:not3connected} and Remark \ref{rem:Luoassumptions}). Hence, there is no direct implication in either direction between these results and our Theorem \ref{thm: intro connectivity}. Regarding the proof we apply a Mayer-Vietoris
argument for graph cohomology and Poincaré duality. Finally we note that the
orientability condition is necessary in order for Theorem \ref{thm: intro connectivity} to
hold. A counterexample is provided in Example \ref{ex:not2connected}.

The structure of the article is as follows: In the preliminary Section 2 we recall the necessary definitions. Section 3 introduces integration, which is then used in Section 4 to derive Poincaré duality results from orientability. Section 5 is devoted to the orientation covering and in Section 6 the proof of Theorem \ref{thm:orientabilities} is assembled from the previous work. Section 7 contains all results regarding connectivity.

\paragraph*{AI declaration} No generative AI was used in the preparation of this paper.

\paragraph*{Acknowledgments} The authors are very grateful to Emilio Goertsches who, at the age of 9, drew the unlabeled graph underlying Example \ref{ex:not2connected}, thereby pointing us towards the question about its geometric realizability. We are thankful for financial support of the Deutsche Forschungsgemeinschaft
(DFG, German Research Foundation) – 452427095.
The third named author is furthermore supported by the grant PID2024-155646NB-I00 of the Agencia Estatal de Investigación.

\section{Preliminaries}

Throughout this paper we use the coefficient ring $\QQ$ unless stated otherwise. Let us summarize the central notions of GKM theory from the  combinatorial point of view.
As the results of this note deal exclusively with abstract GKM graphs we refrain from
introducing GKM manifolds and their correspondence to GKM graphs. We refer to the survey
 \cite{MR4840842} for the reader interested in learning more about this aspect of the theory.

We consider connected $n$-valent graphs $\Gamma$ without loops, but possibly with multiple edges between vertices. We denote by $V(\Gamma)$ the vertex set of $\Gamma$, and by $E(\Gamma)$ its edge set. The edges of the graph $\Gamma$ are undirected, but for notational reasons we include each edge twice in $E(\Gamma)$, once for each possible orientation, so that for $e\in E(\Gamma)$ its initial and terminal vertex $i(e),t(e)\in V(\Gamma)$ are defined. For $e\in E(\Gamma)$ we denote the edge $e$ with opposite orientation by $\overline{e}$. For a vertex $v\in V(\Gamma)$, we denote by $E_v$ the set of edges $e\in E(\Gamma)$ with $i(e)=v$.
\begin{defn}
	A \emph{connection} $\nabla$ on $\Gamma$ is a collection of bijective maps $\nabla_e:E_{i(e)}\to E_{t(e)}$, $e\in E(\Gamma)$, such that $\nabla_e(e) = \overline{e}$ and $\nabla_{\overline{e}} = (\nabla_e)^{-1}$.
\end{defn}
\begin{defn}\label{defn: abstract GKM graph}
	An \emph{(abstract) (unsigned) GKM graph} is a pair $(\Gamma,\alpha)$, where $\Gamma$ is a connected, $n$-valent graph as above, and $\alpha:E(\Gamma)\to \ZZ^k/\{\pm 1\}$ is an \emph{axial function} on $\Gamma$, i.e., 
	\begin{enumerate}
		\item $\alpha(\overline{e}) = \alpha(e)$ for all $e\in E(\Gamma)$,
		\item {for any $v \in V(\Gamma)$ and $e \neq e' \in E_{v}$, the labels $\alpha(e)$
			and $\alpha(e')$ are linearly independent and}
		\item there exists a \emph{compatible} connection $\nabla$ on $\Gamma$, i.e., for any two edges $e,e'\in E(\Gamma)$ with $i(e) = i(e')$ and for some (hence any) lifts $\widetilde{\alpha}(e')$ of $\alpha(e')$ and $\widetilde{\alpha}(\nabla_e e')$ of $\alpha(\nabla_e e')$ there exists $\varepsilon\in \{\pm 1\}$ such that
		\[
		\widetilde{\alpha}(\nabla_e e') \in \varepsilon \widetilde{\alpha}(e') + \ZZ \alpha(e).
		\]
	\end{enumerate}
\end{defn}
Here and below, a \emph{lift} of an element $\pm a\in \ZZ^k/\{\pm 1\}$ is any of the two elements $a$ and $-a$. 
\begin{rem} \label{rem: signed}
There is also the notion of an (abstract) signed GKM graph, in which the axial function
$\alpha$ takes values in $\ZZ^k$ instead of $\ZZ^k/\{\pm 1\}$. Instead of 1.\ above one
demands $\alpha(\overline{e}) = - \alpha(e)$, and in {3.}\ the sign $\varepsilon$ is assumed to be $+1$ throughout (without having to choose any lifts). This is the original point of view, see \cite{GuilleminZaraEquivariantdeRham}, introduced as a combinatorial model for the one-skeleton of torus actions on almost complex or symplectic manifolds. Unsigned GKM graphs are natural if one is interested in manifolds without such a geometric structure. Signed GKM graphs are orientable in the sense defined below (see Remark \ref{rem: signed orientable}).
\end{rem}
Let us recall the notion of orientability of a GKM graph, introduced in \cite[Section 2.3]{2210.01856v1}.

\begin{defn}\label{D: path orientability}
Consider a GKM graph $(\Gamma, \alpha)$ and choose (only for notational purposes) a lift $\tilde \alpha \colon E(\Gamma)
\to \mathbb{Z}^{k}$ of $\alpha$. Let $v \in V(\Gamma)$ and $e_{1}, \ldots, e_{d}$ the
edges emanating from $v$. Then there are unique $k_{i} \in \mathbb{Z}$ and $\varepsilon_{i}
\in \{\pm 1\}$ $(i=2,\ldots,d)$ such that
\[
	\tilde \alpha\left(\nabla_{e_{1}}(e_{i})\right) = \varepsilon_{i}\tilde \alpha(e_{i}) + k_{i}
	\tilde \alpha(e_{1}).
\]
We set $\eta(e_{1}) = - \prod_{i=2}^{d} \varepsilon_{i}$. The labeled graph $(\Gamma, \alpha)$ is called
\emph{orientable} if for some lift of $\alpha$ (and hence any; cf.\ Remark \ref{rem: orientabilitydef})  we have that for each closed edge path $f_{1},\ldots, f_{m}$
\[
	\prod_{i=1}^{m} \eta(f_{i}) =1.
\]
\end{defn}

\begin{rem}\label{rem: orientabilitydef}
\begin{enumerate}[(i)]
\item It is shown in \cite[Section 2.3]{2210.01856v1} that the product $\prod \eta(f_i)$ in the above definition does not depend on the choice of connection or lift of the axial function. In particular, while we will usually fix a lift in later arguments, there is no preferred choice. 
\item If the graph comes from a GKM manifold, the choice of lift is not equivalent of a choice of orientation. Rather, a choice of lift induces orientations on the representations defined by the labels at each vertex and thus at the tangent representations at each fixed point. In two neighboring vertices, these orientations can be compared through an equivariant tube around a two-sphere belonging to any connecting edge $e$. Whether the orientations match is indicated by $\eta(e)$. Orientability should be viewed as a compatibility condition that ensures that the orientation at one fixed point extends around the whole one-skeleton. A choice of orientation is thus rather represented by a choice of lift together with a choice of vertex. This is indeed what is needed to define integration in Section \ref{sec:ABBV}.
\item For further geometric justification for this terminology, see \cite[Remark 2.20]{2210.01856v1}. In the same remark we asked about the relation between this notion and orientability in the sense of \cite{EscherGoertschesSearle}, i.e., the nonvanishing of top nonequivariant graph cohomology (see below). One purpose of this note is to clarify this relation.
\end{enumerate}
\end{rem}

Although in this note no torus actions on manifolds occur, we picture the target $\ZZ^k$ as the integer lattice in the dual of the Lie algebra of a $k$-dimensional torus $T$ (modulo $\{\pm 1\}$). Let $R$ be the graded polynomial ring $H^*(BT)=\QQ[x_1,\ldots,x_k]$, where each variable $x_i$ has degree $2$. Then $\ZZ^k$ is the integer lattice in the degree $2$ part of $R$. We denote by $R^+\subset R$ the graded maximal ideal in $R$.

\begin{defn}[\cite{MR1823050}]\label{defn:eqgraphcohom}
	For an abstract GKM graph $(\Gamma,\alpha)$ with axial function $\alpha:E(\Gamma)\to \ZZ^k/\{\pm 1\}$ we define its \emph{equivariant graph cohomology (with rational coefficients)} as
	\[
	H^*_T(\Gamma) :=\{(f_v)_{v\in V(\Gamma)}\in R^{V(\Gamma)} \mid \alpha(e) \textrm{ divides } f_{i(e)} - f_{t(e)} \textrm{ for all } e\in E(\Gamma)\}.
	\]
	It naturally is an $R$-algebra, where multiplication in $H^*_T(\Gamma)$ is defined componentwise. The \emph{(nonequivariant) graph cohomology (with rational coefficients)} of $(\Gamma,\alpha)$ is the $\QQ$-algebra
	\[
	H^*(\Gamma):= H^*_T(\Gamma)/R^+\cdot H^*_T(\Gamma).
	\]
\end{defn}
Let us recall the definition of the Thom classes of vertices and edges in equivariant graph cohomology.

\begin{defn}[\cite{GuilleminZara1Skeleta}]
	Let $(\Gamma,\alpha)$ be a $d$-valent GKM graph and $\tilde\alpha$ a lift of $\alpha$. For a vertex $v\in V(\Gamma)$, we define its Thom class ${\mathrm{Th}}_v\in H^{2d}_T(\Gamma)$ as the element $(f(u))_u$ defined by
	\[
	f(u) = \begin{cases}\prod_{e\in E_v} \tilde\alpha(e) & u=v \\ 0 & u\neq v.\end{cases}
	\]
\end{defn}

\begin{defn}[{\cite[Definition 2.15]{2210.01856v1}}]\label{D: Thom class edge}
Let $(\Gamma, \alpha)$ be a $d$-valent GKM graph, $\tilde \alpha$ a lift of $\alpha$ and
$\varepsilon_{i}$, $k_{i}$ as in Definition \ref{D: path orientability}. For an edge $e$
we define the \emph{Thom class} of $e$ by
\[
	(\mathrm{Th}_{e})_{v} = 
	\begin{cases}
	\prod_{f \in E(\Gamma)_{i(e)} \setminus \{e\}\}} \tilde \alpha(f), & v=i(e)\\
	-\eta(e) \cdot \prod_{f \in E(\Gamma)_{t(e)} \setminus \{e\}\}} \tilde \alpha(f), & v=t(e)\\
	0, & \text{otherwise},
	\end{cases}
\]
where $v \in V(\Gamma)$. A straightforward computation shows that $\mathrm{Th}_{e} \in
H^{2d-2}_{T}(\Gamma)$. 
\end{defn}
Note that both types of Thom classes depend on a choice of lift $\tilde{\alpha}$ and are well-defined only up to global sign.

\section{ABBV localization for abstract GKM graphs} \label{sec:ABBV}

This section describes a construction of \cite[Section 2.4]{GuilleminZaraEquivariantdeRham}, with the slight difference that instead of signed GKM graphs we consider  orientable unsigned GKM graphs.

Let $(\Gamma,\alpha)$ be a $d$-valent orientable (unsigned) GKM graph. We fix a vertex
$v_{0}$ and a lift $\tilde\alpha$ of $\alpha$ such that $\eta$ from Definition \ref{D: path
	orientability} is defined. We define an $R$-linear map as follows. For a vertex $w$, we
	choose an edge path $e_1,\ldots,e_n$ from $v_{0}$ to $w$,
and put 
\[
\Psi(w):=\eta(e_1)\cdot\ldots \cdot \eta(e_n)\in \{\pm 1\}
\]
This element $\Psi(w)$ is well-defined by the orientability of $\Gamma$: if we had chosen a different
edge path $f_1,\ldots,f_m$ from $v$ to $w$, then $\eta(e_1)\cdot\ldots\cdot \eta(e_n) =
\eta(f_1)\cdot\ldots\cdot \eta(f_m)$.

Now, for $\omega = (\omega_w)_{w\in V(\Gamma)}$, put
\[
\int \omega:= \sum_{w\in V(\Gamma)} \Psi(w)\frac{\omega_w}{\prod_{e\in E_w} \tilde\alpha(e)}.
\]
which a priori lies in the field of fractions of $R$. However one has
\begin{thm}[{\cite[Theorem 2.2]{GuilleminZaraEquivariantdeRham}}] \label{thm: ABBV}
The above map takes polynomial values and yields a well-defined map \[\int\colon H_T^*(\Gamma)\rightarrow R^{*-2d}.\]
\end{thm}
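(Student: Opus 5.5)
The plan is to show that $\int\omega$ has no poles. Since $\int\omega$ is a sum of fractions whose denominators are products of lifted labels, it suffices to fix a primitive direction $\beta\in\ZZ^k$ (up to sign) and prove that the sum is regular along the hyperplane $\{\beta=0\}$. Concretely, we show that after multiplying by a common denominator, the numerator is divisible by the needed power of $\beta$. Equivalently, we work in the localization $R_{(\beta)}$ at the prime ideal $(\beta)$, which is a discrete valuation ring, and show that the sum lies in $R_{(\beta)}$. Since $R$ is a UFD, $R$ is the intersection of all $R_{(p)}$ over height-one primes $p$, and only the linear forms occurring as labels can give poles. Hence regularity along each such $\beta$ gives $\int\omega\in R$. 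Homogeneity of each summand of degree $\deg\omega-2d$ then gives $\int\omega\in R^{*-2d}$, and $R$-linearity and well-definedness on the set $H^*_T(\Gamma)$ are clear. Only vertices with an edge whose label is proportional to $\beta$ contribute poles. By condition 2 of Definition~\ref{defn: abstract GKM graph}, each vertex has at most one such edge, so the term at $w$ has at most a simple pole along $\beta$.

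The key step is to pair the vertices. Let $e$ be an edge with $\alpha(e)$ proportional to $\beta$, joining $v=i(e)$ and $w=t(e)$. It suffices to show that the sum of the terms at $v$ and $w$ has no pole along $\beta$. The vertices with a $\beta$-edge are partitioned into such pairs, because each vertex has exactly one $\beta$-edge at most. Choose lifts so that $\tilde\alpha(e)=\tilde\alpha(\bar e)=\pm c\beta$. Write $\tilde\alpha(\nabla_e e_i)=\varepsilon_i\tilde\alpha(e_i)+k_i\tilde\alpha(e)$ for the other edges $e_i$ at $v$. Modulo $\beta$, we get $\prod_{f\in E_w\setminus\{\bar e\}}\tilde\alpha(f)\equiv \prod_i\varepsilon_i\prod_{f\in E_v\setminus\{e\}}\tilde\alpha(f)$. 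Moreover $\omega_v\equiv\omega_w$ modulo $\beta$, because $\alpha(e)$ divides $\omega_v-\omega_w$. Finally, orientability gives $\Psi(w)=\Psi(v)\eta(e)=-\Psi(v)\prod_i\varepsilon_i$.

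Combining these, the residues are handled as follows. Write $P_v=\prod_{f\ne e}\tilde\alpha(f)$, which is a unit in $R_{(\beta)}$, and similarly $P_w$. The sum of the two terms is
\[
\frac{1}{\tilde\alpha(e)}\left(\Psi(v)\frac{\omega_v}{P_v}+\Psi(w)\frac{\omega_w}{\tilde\alpha(\bar e)\tilde\alpha(e)^{-1}P_w}\right).
\]
With $\tilde\alpha(\bar e)=\tilde\alpha(e)$, the bracket is congruent modulo $\beta$ to $\Psi(v)\frac{\omega_v}{P_v}\bigl(1-\prod\varepsilon_i\cdot\prod\varepsilon_i\bigr)=0$. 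Hence the bracket is divisible by $\beta$ in $R_{(\beta)}$, and the pole cancels. I expect the main obstacle to be exactly this sign bookkeeping. One must fix a single lift $\tilde\alpha$ of $\alpha$, so that $\tilde\alpha(\bar e)=\tilde\alpha(e)$ is used consistently both in the denominator at $w$ and in the definition of $\eta$. One must also account for the $-$ sign in $\eta$ against the fact that $\prod_{E_w}$ contains $\tilde\alpha(\bar e)$ with the same sign as $\tilde\alpha(e)$. Checking that $\eta(e)$ is independent of orientation of $e$ (from $\nabla_{\bar e}=\nabla_e^{-1}$) ensures the pairing argument is symmetric in $v$ and $w$.
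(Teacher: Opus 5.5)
Your proposal is correct and follows essentially the same argument as the paper: for a fixed label direction $\beta$, you pair the two endpoints $v,w$ of the unique $\beta$-edge and cancel the simple pole using three facts. These are $\omega_v\equiv\omega_w$, the connection congruence $P_w\equiv\bigl(\prod_i\varepsilon_i\bigr)P_v$, and $\Psi(w)=\Psi(v)\eta(e)$ under the symmetric-lift convention $\tilde\alpha(\bar e)=\tilde\alpha(e)$. Your use of the discrete valuation ring $R_{(\beta)}$ and $R=\bigcap_p R_{(p)}$, where the paper uses a common denominator $\beta_1\cdots\beta_N$ and divisibility of the numerator by each $\beta_i$, is only a cosmetic repackaging of the same step.
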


Here $R$ is understood to be $0$ in negative degrees so in particular the integral vanishes on $H_T^{<2d}(\Gamma)$. The proof is the same as that of \cite[Theorem 2.2]{GuilleminZaraEquivariantdeRham} where the theorem is proved for signed GKM graphs. We only need to modify the proof slightly when dealing with the signs of weights. For the convenience of the reader and due to its essential role in the rest of the paper, we restate the adapted proof here.

\begin{proof}
One can write
\[\int\omega = \frac{g}{\beta_1\cdot\ldots\cdot \beta_N}\]
where the $\beta_i\in H^2(BT;\QQ)$ are labels occurring in the GKM graph.
We may assume that in the equation above the $\beta_i$ are pairwise linearly independent
by leaving out labels that are rational multiples of others, as in the definition of $\int
\omega$ collinear labels never occur in the same denominator. Given the above equation, it suffices to show that each $\beta_i$ divides $g$. For the sake of notation we do the argument for $\beta_1$.

It suffices to prove that $\int \omega$ is a sum of terms of the form
$\frac{f}{\beta_2\cdot\ldots\cdot \beta_N}$ for some polynomial $f\in R$. Fix a vertex
$w$. Clearly, if $\beta_1$ is not (up to rational multiple) an outgoing label at $w$, then
the corresponding term \[\Psi(w)\frac{\omega_w}{\prod_{i(e)=w} \tilde\alpha(e)}\] in the definition of $\int\omega$ is of
this form. If on the other hand $\beta_1=\lambda \tilde{\alpha}(h)$ for some $\lambda\in
\QQ$ and edge $h\in E(\Gamma)$ from $w$ to another $v\in V(\Gamma)$, then we consider the
two summands of $v$ and $w$ together. We denote by $\alpha_1,\ldots,\alpha_d$ and
$\alpha_1',\ldots,\alpha_d'$ the outgoing labels at $w$ and $v$ respectively, where
$\alpha_1=\tilde \alpha(h)=\alpha_1'$. Then the added contributions of $v$ and $w$ to the
integral become
\begin{align*}
\Psi(w)\frac{\omega_w}{\alpha_1\ldots\alpha_d}+\Psi(v)\frac{\omega_v}{\alpha_1'\ldots\alpha_d'}
&= \pm\frac{\omega_w \alpha_2'\ldots\alpha_d'+\eta(h)\omega_v \alpha_2\ldots\alpha_d}{\tilde{\alpha}(h)\alpha_2\ldots\alpha_d\cdot \alpha_2'\ldots\alpha_d'}\\&=
\pm\frac{\omega_w \alpha_2'\ldots\alpha_d'-\omega_v \epsilon_2\alpha_2\ldots\epsilon_d\alpha_d}{\tilde{\alpha}(h)\alpha_2\ldots\alpha_d\cdot \alpha_2'\ldots\alpha_d'}
\end{align*} 
where the $\epsilon_i\in\{ \pm{1}\}$ are (after possibly renumbering the $\alpha_i$) given by $\alpha_i'\equiv \epsilon_i\alpha_i\mod \tilde{\alpha}(h)$. As also $\omega_w\equiv\omega_v\mod \tilde{\alpha}(h)$ we get that the numerator is divisible by $\tilde{\alpha}(h)$. Consequently the above term can also be written as $\frac{f}{\beta_2\cdot\ldots\cdot \beta_N}$, $f\in R$, which finishes the proof.

\end{proof}

As this integration map is $R$-linear, it also induces a well-defined linear map $H^*(\Gamma) \to R/R^+ = \QQ$.

\begin{cor}\label{cor:Thomnonzero}
	For an orientable GKM graph $(\Gamma,\alpha)$, the Thom class $\Th_v$ of any vertex $v$ defines a nonzero element in $H^*(\Gamma)$.
\end{cor}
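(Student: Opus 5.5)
The approach is to evaluate the integration map of Theorem \ref{thm: ABBV} on $\Th_v$ and show the value is a nonzero constant. The key point is that the integral descends to nonequivariant cohomology, as noted after the theorem. Since $\int$ is $R$-linear and sends $H^{2d}_T(\Gamma)$ into $R^0=\QQ$, it induces a linear functional
\[
\overline{\int}\colon H^{2d}(\Gamma)=H^{2d}_T(\Gamma)/(R^+H^*_T(\Gamma))^{2d}\longrightarrow \QQ .
\]
This is well defined because for $r\in R^+$ homogeneous and $\beta\in H^*_T(\Gamma)$ of degree $2d-\deg r$, we have $\int r\beta = r\int\beta$. Here $\int\beta$ lies in $R^{\deg\beta-2d}$, which is zero in negative degrees, so $r\int\beta=0$. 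Hence every element of $R^+H^*_T(\Gamma)$ of degree $2d$ has integral $0$.

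To make this precise, I would write a degree-$2d$ element of $R^+\cdot H^*_T(\Gamma)$ as a finite sum $\sum r_j\beta_j$ with $r_j\in R^+$ and $\beta_j\in H^*_T(\Gamma)$ homogeneous. Since both sides are graded, this can be arranged by taking homogeneous components. Then $\deg\beta_j<2d$, so $\int\beta_j=0$ by Theorem \ref{thm: ABBV}.

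Next I compute $\int\Th_v$ directly from the definition. Only the vertex $v$ contributes, so
\[
\int \Th_v=\Psi(v)\frac{\prod_{e\in E_v}\tilde\alpha(e)}{\prod_{e\in E_v}\tilde\alpha(e)}=\Psi(v)=\pm1\neq 0 .
\]
This computation uses the same lift $\tilde\alpha$ for both the Thom class and the integral. A different choice only changes signs, and the Thom class is anyway defined only up to sign.

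Now suppose $\Th_v$ were zero in $H^*(\Gamma)$. Then $\Th_v\in R^+H^*_T(\Gamma)$, so its integral would vanish by the first step, contradicting the computation $\int\Th_v=\pm1$. Hence $\Th_v$ is nonzero in $H^*(\Gamma)$.

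I do not expect a real obstacle here. The only point needing care is the grading argument showing that $\int$ kills $R^+H^*_T(\Gamma)$ in degree $2d$, which relies on $\int$ vanishing below degree $2d$ and on homogeneity.
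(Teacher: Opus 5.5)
Your proposal is correct and matches the paper's argument: by $R$-linearity the integral descends to a map $H^*(\Gamma)\to R/R^+=\QQ$, and $\int \Th_v=\Psi(v)=\pm1$. Your degree-counting argument that $\int$ kills $R^+\cdot H^*_T(\Gamma)$ is valid, though it already suffices to note that $\int(r\beta)=r\int\beta\in R^+$ for $r\in R^+$.
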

\begin{proof}
By definition, any Thom class of a vertex integrates to $\pm 1$.
\end{proof}

\begin{rem}\label{rem: signed orientable}
The same corollary holds for signed GKM graphs by the original {\cite[Theorem 2.2]{GuilleminZaraEquivariantdeRham}}.
Conversely, \cite[Proposition 2.23]{2210.01856v1} showed that for a nonorientable GKM graph, the Thom classes of vertices vanish in nonequivariant graph cohomology. This implies in particular that signed GKM graphs are orientable.
\end{rem}

\section{Poincaré duality for GKM graphs}\label{sec:PD}
Let $\Gamma$ be an abstract (unsigned) orientable {$T$}-GKM graph, and $G$ an arbitrary subgraph (not necessarily connected or $n$-valent). Then we define
\[
H^*_T(G):=\{(f_v)_{v\in V(G)}\in R^{V(G)}\mid \alpha(e)\textrm{ divides } f_{i(e)}-f_{t(e)} \textrm{ for all }e\in E(G)\},
\]
where $E(G)\subset E(\Gamma)$ denotes the set of edges belonging to $G$. For $G=\Gamma$ this coincides with the usual equivariant graph cohomology, see Definition \ref{defn:eqgraphcohom} above. This cohomology is contravariant in the sense that for subgraphs $G_1\subset G_2\subset \Gamma$, we have a natural restriction map $H^*_T(G_2)\to H^*_T(G_1)$.

We also define equivariant graph cohomology with compact support, as follows.
\[
H^*_{T,c}(G):= \{(f_v)_{v\in V(G)}\in H^*_T(G)\mid \alpha(e)\textrm{ divides } f(i(e)) \textrm{ for all }e\in E_v\setminus E(G) \}.
\]
We picture $G$, together with small pieces of all edges outgoing of $G$, as a thickening of $G$ inside $\Gamma$. The above cohomology is then the obvious graph-theoretical analogue of ordinary cohomology with compact support.

\begin{rem}
We note that $H_{T,c}^*(G)$ depends on the edges contained in $G$ and does in general not agree with the subring of $H_T^*(\Gamma)$ supported on $V(G)$.
Special cases of the above definition and support conditions in GKM graph cohomology appear in the literature in the context of Thom classes or related to Morse style arguments in a symplectic context. E.g.\ cohomology with support in certain level sets is considered in
\cite[Section 2.9]{GuilleminZaraEquivariantdeRham}.
\end{rem}

\begin{rem}\label{rem:cohomfreedimT=2}
	It was shown in \cite[Proposition 2.25]{2210.01856v1} (and in \cite[Section 3]{Luo}, using arguments from algebraic geometry) that for $T$ a $2$-dimensional torus, the equivariant cohomology $H^*_T(\Gamma)$ is always a free $R$-module. The same argument works for the equivariant cohomologies $H^*_T(G)$ and $H^*_{T,c}(G)$, in case $\dim T = 2$. Concretely, one considers the short exact sequence
	\[
	0\longrightarrow H^*_T(G)\longrightarrow \bigoplus_{v\in V(G)} R\longrightarrow \im \varphi\longrightarrow 0,
	\]
	where $\varphi:\bigoplus_{v\in V(G)} R\to \bigoplus_{e\in E(G)} R/(\alpha(e));\, (f_v)_v\mapsto (f_{i(e)}-f_{t(e)} + R\cdot \alpha(e))_e$. As $\im \varphi$ has depth $\geq 1$, it follows that the depth of $H^*_T(G)$ is $2$, whence it is a free module. The argument for equivariant cohomology with compact support is analogous, with a slightly modified map $\varphi$, for which one takes into account also those edges $e\in E_v\setminus E(G)$.
\end{rem} 
 Just as ordinary cohomology with compact support, this cohomology is covariant, in the
sense that for subgraphs $G_1\subset G_2\subset \Gamma$, we have a natural push-forward
map $H^*_{T,c}(G_1)\to H^*_{T,c}(G_2)$ given by extending a class by zero on vertices in
$V(G_2)\setminus V(G_1)$. In particular, we naturally have $H^*_{T,c}(G)\subset H^*_T(\Gamma)$ for any subgraph $G\subset \Gamma$.

For any $G$ we define the duality map
\[
D_G:H^{*}_T(G)\longrightarrow \Hom_R(H^{*}_{T,c}(G),R);\, \alpha\mapsto (\beta\mapsto \int (\alpha\cdot \beta)),
\]
where we consider the pointwise product $\alpha\cdot \beta\in H^{*}_{T,c}(G)\subset H^{*}_T(\Gamma)$ and $\int:H^{*}_T(\Gamma)\to R$ is the $R$-linear map from Section \ref{sec:ABBV}. Note that  $D_G$ is a map of degree $-2d$ when equipping $\Hom_R(H^{*}_{T,c}(G),R)$ with the grading where the degree $k$ component consists of maps which increase degree by $k$.

 We will prove equivariant Poincaré duality for $G$, as follows:
\begin{thm}\label{thm:PD}
	For orientable $\Gamma$ and any $G\subset \Gamma$, the duality map $D_G$ is an isomorphism.
\end{thm}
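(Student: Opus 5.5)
Following the introduction's outline, we argue by induction on the size of $G$ using Mayer--Vietoris sequences, in the spirit of the classical proof of Poincaré duality. The first step is to set up, for a decomposition $G=G_1\cup G_2$ of a subgraph into two subgraphs with intersection $G_{12}=G_1\cap G_2$ (vertices and edges common to both), the two sequences
\[
0\to H^*_T(G)\to H^*_T(G_1)\oplus H^*_T(G_2)\to H^*_T(G_{12})
\]
and
\[
H^*_{T,c}(G_{12})\to H^*_{T,c}(G_1)\oplus H^*_{T,c}(G_2)\to H^*_{T,c}(G)\to 0.
\]
Their exactness follows from the definitions. A class on $G$ is the same as compatible classes on $G_1,G_2$, since every edge of $G$ lies in $G_1$ or $G_2$. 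For the compactly supported sequence, surjectivity is more delicate: a compactly supported class on $G$ must be split as a sum of classes supported in $G_1$ and $G_2$, and this does not hold for arbitrary decompositions. I would therefore restrict to \emph{elementary} decompositions, where $G_2$ is a single vertex together with some of its edges, or a single edge. For these the relevant cokernels are computable by hand.

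Dualizing the compact sequence with $\Hom_R(-,R)$ gives a left exact sequence, and the maps $D$ commute with restriction and extension by zero because $\int(\alpha|_{G_i}\cdot\beta)=\int(\alpha\cdot\beta)$ for $\beta$ supported in $G_i$. A five-lemma (four-term) argument then shows that if $D_{G_1},D_{G_2}$ are isomorphisms and $D_{G_{12}}$ is injective, then $D_G$ is an isomorphism. More precisely, injectivity uses only the left parts of the sequences. Surjectivity, however, requires the right-hand map $H^*_T(G_1)\oplus H^*_T(G_2)\to H^*_T(G_{12})$ to have an image compatible with the corresponding dual map. This is where the elementary choice matters.

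The base cases are handled by direct computation. For a single vertex $v$ with no edges, $H^*_T=R$ and $H^*_{T,c}=R\cdot\prod_{e\in E_v}\tilde\alpha(e)$. The pairing sends $(1,\Th_v)\mapsto\pm1$, which is a perfect pairing. For a single edge $h$ with its two endpoints, $H^*_T$ is free over $R$ on $1$ and $(0,\tilde\alpha(h))$. The compact version is generated by $\Th_h$ and $\Th_v$, or after adjustment by the two vertex Thom classes modulo relations. The ABBV computation from the proof of Theorem \ref{thm: ABBV} shows that the Gram matrix has unit determinant; here the orientability sign $\eta(h)$ enters exactly as in that proof. Orientability is used globally to make $\int$ well defined, and locally through $\Psi$ so that the signs match.

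The induction would build $G$ edge by edge and vertex by vertex. At each stage the new piece meets the old one in a vertex or a disjoint union of vertices, and $D$ is clearly an isomorphism on disjoint unions. The main obstacle is the surjectivity step: showing that the connecting behaviour of the non-right-exact sequence $H_T(G_1)\oplus H_T(G_2)\to H_T(G_{12})$ is matched on the dual side. Since these modules need not be free, $\Hom_R$ is not exact. I would handle this by avoiding abstract homological algebra. When $G_2$ is a single edge $h$ glued to $G_1$ along its endpoints, I would verify directly that a functional on $H^*_{T,c}(G)$ whose restriction to $H^*_{T,c}(G_1)$ comes from some $\alpha_1$ satisfies a divisibility condition: the values of $\alpha_1$ at the two endpoints differ modulo $\tilde\alpha(h)$ in a controlled way. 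This is checked by pairing the functional with classes supported on $h$, and it allows $\alpha_1$ to be adjusted to a class on $G$. I expect this explicit divisibility check, again modeled on the numerator computation in Theorem \ref{thm: ABBV}, to be the crux of the argument.
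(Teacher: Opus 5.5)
Your architecture matches the paper's: the same two Mayer--Vietoris sequences, the same base cases (vertex sets, and a single edge whose $H^*_{T,c}$ has basis $\{\Th_h,\Th_v\}$), induction adding one edge at a time, and a four-term diagram chase. The gap is your claim that the compactly supported sequence is right exact for elementary decompositions. That claim is false, and it fails exactly at the last step of your own induction. Take $G=\Gamma$ with $d\geq 2$, let $G_2$ be a single edge $h$ from $v$ to $w$, and let $G_1=\Gamma\setminus h$. The class $1\in H^0_{T,c}(\Gamma)=H^0_T(\Gamma)$ cannot be split. At $v$ you would need $1=a\pm b$ in degree $0$ with $\tilde\alpha(h)\mid a$ (because $h\notin E(G_1)$) and $\tilde\alpha(e)\mid b$ for the other edges $e\in E_v$; this forces $a=b=0$.

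Your left-exactness of the dual row was derived from this surjectivity. So the injectivity of $\Hom_R(H^*_{T,c}(G),R)\to\Hom_R(H^*_{T,c}(G_1),R)\oplus\Hom_R(H^*_{T,c}(G_2),R)$ is currently unjustified, and both halves of your four-term argument use it. The correct statement, proved in Proposition \ref{prop:exactcptandnot}, is that the cokernel is \emph{torsion}. Let $g$ be the product of the labels of the edges of $\Gamma$ leaving $G_1$. For $f\in H^*_{T,c}(G)$, the restriction of $gf$ to $G_1$ lies in $H^*_{T,c}(G_1)$, and $gf$ minus its extension by zero lies in $H^*_{T,c}(G_2)$. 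Since $R$ is a domain, this already makes the dual map injective, for every decomposition. So the elementary pieces are not needed for exactness; they are needed only so that $D_{G_2}$ and $D_{G_{12}}$ fall under the base cases.

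Conversely, the surjectivity obstacle you single out does not exist. Given $\phi$, write its two restrictions as $D_{G_1}y_1$ and $D_{G_2}y_2$. By commutativity, $D_{G_{12}}$ sends the image of $(y_1,y_2)$ in $H^*_T(G_{12})$ to the image of $\phi$ under the composite of the two dual maps. That composite is zero because the dual row is a complex. Injectivity of $D_{G_{12}}$ then shows that $y_1$ and $y_2$ agree on $G_{12}$, so they glue to some $x\in H^*_T(G)$ by exactness of the top row. Finally $D_Gx=\phi$ by injectivity of the first dual map. Neither middle exactness of the dual row nor any control of the image of $H^*_T(G_1)\oplus H^*_T(G_2)\to H^*_T(G_{12})$ is used.

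Your computation with $\Th_h$ is a correct hand-unwinding of the gluing step. Using $\tilde\alpha(h)\Th_h=\Th_v-\eta(h)\Th_w$ and $\Psi(w)=\eta(h)\Psi(v)$, one gets $\tilde\alpha(h)\phi(\Th_h)=\Psi(v)(\alpha_1(v)-\alpha_1(w))$, so $\tilde\alpha(h)$ divides $\alpha_1(v)-\alpha_1(w)$. However, this only shows that the glued class pairs like $\phi$ against $H^*_{T,c}(G_1)+H^*_{T,c}(G_2)$. To conclude on all of $H^*_{T,c}(G)$ you again need the torsion-cokernel fact.

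A minor point on the edge base case: $\Th_v$ and $\Th_w$ do not generate $H^*_{T,c}$ of the edge. Their Gram determinant against $\{1,g\}$ is $\pm\tilde\alpha(h)$, which is not a unit. You must use $\{\Th_h,\Th_v\}$, as in your first description.
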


\begin{rem} In \cite{AlldayFranzPuppeEqCohomSyzygies}, Allday--Franz--Puppe introduce equivariant homology of a $T$-space as the homology of the dual of the singular Cartan model, and use it to prove an equivariant Poincaré duality statement, see \cite[Corollary 1.3]{AlldayFranzPuppeEqCohomSyzygies}. In their nomenclature, the map $D_G$ corresponds to the cap product with an equivariant orientation. See also \cite{AlldayFranzPuppeEqPALD}. In particular for GKM graphs coming from a GKM manifold, equivariant Poincaré duality in the form of Theorem \ref{thm:PD} is known to hold by using \cite[Corollary 1.3]{AlldayFranzPuppeEqCohomSyzygies} and the fact that non-equivariant Poincaré duality holds for GKM manifolds, which are orientable compact manifolds by definition. In this paper we argue the other way around and deduce Poincaré duality of non-equivariant graph cohomology (Corollary \ref{cor:nonequivPD}) from the equivariant Poincaré duality above. In particular, it is interesting (albeit not terribly useful) to observe that on the geometric side, we obtain a proof of Poincaré duality for GKM manifolds (which always have orientable GKM graphs), which does not invoke the geometric version of the theorem.

The proof of Theorem \ref{thm:PD} is inspired by the standard proof of classical Poincaré duality (see e.g.~\cite[Section 3.3]{MR1867354}).
%It is inspired by \cite{AlldayFranzPuppeEqCohomSyzygies} (Allday-Franz-Puppe) and \cite{Luo}.
\end{rem}

\begin{prop}\label{prop:exactcptandnot} For any subgraph $G\subset \Gamma$  that is the union of two subgraphs $U$ and $W$, we have exact sequences
	\[
	0 \longrightarrow H^*_T(G)\longrightarrow H^*_T(U)\oplus H^*_T(W)\longrightarrow H^*_T(U\cap W)
	\]
	and
	\[
	0 \longrightarrow H^*_{T,c}(U\cap W)\longrightarrow H^*_{T,c}(U)\oplus H^*_{T,c}(W)\longrightarrow H^*_{T,c}(G) \longrightarrow A\longrightarrow 0,
	\]
	where $A$ is a torsion $R$-module.
\end{prop}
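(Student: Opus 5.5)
The plan is to define the maps as in the topological Mayer--Vietoris sequence and check exactness at each spot by direct inspection of vertex components. The subgraph $U\cap W$ has vertex set $V(U)\cap V(W)$ and edge set $E(U)\cap E(W)$. The subgraph $G=U\cup W$ has $V(G)=V(U)\cup V(W)$ and $E(G)=E(U)\cup E(W)$. In the first sequence, the map $H^*_T(G)\to H^*_T(U)\oplus H^*_T(W)$ is $f\mapsto (f|_U,f|_W)$, and the next map is $(g,h)\mapsto g|_{U\cap W}-h|_{U\cap W}$.

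\textbf{First sequence.} Injectivity holds because $V(G)=V(U)\cup V(W)$. The composite is clearly zero. For exactness in the middle, suppose $g|_{U\cap W}=h|_{U\cap W}$. Then $g$ and $h$ glue to a unique tuple $f\in R^{V(G)}$. Every edge of $G$ lies in $E(U)$ or in $E(W)$, so the divisibility condition for $f$ along that edge is exactly the condition for $g$ or for $h$. Hence $f\in H^*_T(G)$.

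\textbf{Second sequence: maps.} Here the maps are extension by zero. The first map is $\beta\mapsto(\beta,-\beta)$, with both entries extended by zero. The second is $(g,h)\mapsto g+h$, again extended by zero.

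First I would check these are well defined. Take $\beta\in H^*_{T,c}(U\cap W)$ extended by zero to $U$, and an edge $e\in E(U)$.
\begin{itemize}
\item If $e\in E(U\cap W)$, the condition is the one already holding for $\beta$.
\item If both endpoints lie outside $V(U\cap W)$, the difference is $0$.
\item Otherwise $e$ has exactly one endpoint $v$ in $V(U\cap W)$, or both endpoints there but $e\notin E(U\cap W)$. In either case $e\in E_v\setminus E(U\cap W)$ for an endpoint $v$ in $V(U\cap W)$, so $\alpha(e)$ divides $\beta_v$ by the compact support condition. The value at the other endpoint is either $0$ or also divisible for the same reason.
\end{itemize}
The compact support condition on $U$ follows similarly, since an edge leaving $U$ also leaves $U\cap W$. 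The same bookkeeping shows that the push-forward $H^*_{T,c}(U)\to H^*_{T,c}(G)$ is well defined.

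\textbf{Second sequence: exactness.} Injectivity is obvious, and the composite is zero. For the middle, suppose $g+h=0$ on $V(G)$. Then $g$ vanishes off $V(U)\cap V(W)$ and $h=-g$. I must show $\beta:=g|_{U\cap W}$ lies in $H^*_{T,c}(U\cap W)$.
\begin{itemize}
\item Edge conditions along $E(U\cap W)$ hold since $g\in H^*_T(U)$.
\item Take $v\in V(U\cap W)$ and $e\in E_v\setminus E(U\cap W)$. Then $e\notin E(U)$ or $e\notin E(W)$.
\item If $e\notin E(U)$, the compact support condition of $g$ gives $\alpha(e)\mid g_v$. If $e\notin E(W)$, the same condition for $h=-g$ gives it.
\end{itemize}
This is the one point where the support conditions interact, and I expect it to be the main bookkeeping obstacle. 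Note that an edge of $U$ between two vertices of $U\cap W$ which is not in $W$ is still handled, via $h$.

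\textbf{Torsion cokernel.} Let $\gamma\in H^*_{T,c}(G)$. I will show that $P\gamma$ lies in the image, where $P$ is the product of the labels of all edges in $E(G)$; in fact a suitable finite product of labels suffices. Define $g:=P\gamma$ on $V(U)$ and $h:=P\gamma$ on $V(W)\setminus V(U)$, extended by zero on the rest of $W$. Each of $g$ and $h$ is a restriction of the single class $P\gamma$, except for truncation to zero. Every edge condition that could fail involves an edge of $G$ (or an edge leaving $G$) across the truncation. For an edge of $G$, divisibility holds because its label divides $P$. For an edge leaving $G$, divisibility holds by the compact support of $\gamma$. The same reasoning gives the compact support conditions for $g$ on $U$ and $h$ on $W$. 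Then $g+h=P\gamma$, so $A$ is annihilated by the nonzero element $P$ and is therefore torsion.
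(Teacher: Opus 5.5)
Your proof is correct and follows the same route as the paper's: glue along common vertices for the first sequence, use that an edge not in $U\cap W$ misses $U$ or $W$ for middle exactness of the compactly supported sequence, and kill the cokernel by multiplying with a product of labels. The differences are cosmetic. The paper multiplies by the product of labels of edges of $\Gamma$ leaving $U$ rather than of all edges of $G$. You also explicitly check that extension by zero is well defined, which the paper leaves implicit.
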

\begin{proof}
In the first sequence, the left hand map is given by the difference of the two restrictions, which
is clearly injective. The right hand map is the sum of the the restrictions so the
composition is zero. We observe that an element in the kernel of the right hand map
consists of two elements whose values at the vertices of $U\cap W$ coincide and come from
a single element $x\in R^{V(G)}$. It remains to check that this element satisfies the
congruence relations corresponding to the edges in $E(G)=E(U)\cup E(W)$. But every such
congruence relation is guaranteed by the fact that $x$ restricts to $H_T^*(U)$ and
$H_T^*(W)$.

For the second sequence the first map is the sum of the extensions by zero, whereas the second map
is their difference. Again clearly the first map is injective and the composition is zero.
An element in the kernel of $H^*_{T,c}(U)\oplus H^*_{T,c}(W)\longrightarrow H^*_{T,c}(G)$
comes from a single $x\in R^{V(U\cap W)}$ and it remains to check the congruence
conditions. For every edge in $E(U\cap W)$ the congruence holds from the
fact that $x$ extends to an element of $H_{T,c}^*(U)$. Any edge $e$ of $\Gamma$ adjacent
but not contained in $U\cap W$ is either not contained in $U$ or not contained in $W$ and
again the congruence follows from the fact that $x$ extends to an element of
$H_{T,c}^*(U)$ and of $H_{T,c}^*(W)$. Hence $x\in H_{T,c}^*(U\cap W)$.
	
	It remains to prove that $A$ is torsion. Let $(f_v)_{v\in V(G)}$ be any element in $H^*_{T,c}(G)$. Let $g$ be the product of all labels of edges of $\Gamma$ outgoing of $U$ (with arbitrary sign). Then $(gf_v)_{v\in V_U}$ defines an element of $H^*_{T,c}(U)$ which we can subtract from $(gf_v)_{v\in V(G)}$. The difference is then in $H^*_{T,c}(W)$. We have shown that $(gf_v)_{v\in V(G)}$ is in the image of $H^*_{T,c}(U)\oplus H^*_{T,c}(W)\longrightarrow H^*_{T,c}(G)$.
\end{proof}

\begin{lem}\label{lem:MVSdiagram} There is a commutative diagram of $R$-modules
\[
\begin{tikzcd}[column sep=small, cells={nodes={inner xsep=1pt}}]
	0 \ar[r] & H^{*}_T(G)\ar[r] \arrow{d}{D_G} & H^{*}_T(U)\oplus H^{*}_T(W) \ar[r] \arrow{d}{D_U\oplus D_W} & H^{*}_T(U\cap W)\arrow{d}{D_{U\cap W}} \\
	0 \ar[r] & \Hom(H^{*}_{T,c}(G),R) \ar[r] & \Hom(H^{*}_{T,c}(U),R)\oplus \Hom(H^{*}_{T,c}(W),R) \ar[r] & \Hom(H^{*}_{T,c}(U\cap W),R)
\end{tikzcd}
\]
in which the upper row is exact and the bottom row is a complex, exact at $\Hom_R(H^{*}_{T,c}(G),R)$.
\end{lem}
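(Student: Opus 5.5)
The plan is to take the upper row directly from the first exact sequence of Proposition \ref{prop:exactcptandnot}, so nothing new is needed there. For the bottom row, I will apply the contravariant functor $\Hom_R(-,R)$ to the second sequence of Proposition \ref{prop:exactcptandnot},
\[
0\longrightarrow H^*_{T,c}(U\cap W)\longrightarrow H^*_{T,c}(U)\oplus H^*_{T,c}(W)\longrightarrow H^*_{T,c}(G)\longrightarrow A\longrightarrow 0 .
\]
Since $\Hom_R(-,R)$ is left exact and $R$ is a domain, $\Hom_R(A,R)=0$ because $A$ is torsion. Write $I$ for the image of $H^*_{T,c}(U)\oplus H^*_{T,c}(W)\to H^*_{T,c}(G)$. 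From $0\to I\to H^*_{T,c}(G)\to A\to 0$ we get that $\Hom(H^*_{T,c}(G),R)\to \Hom(I,R)$ is injective. From $H^*_{T,c}(U\cap W)\to H^*_{T,c}(U)\oplus H^*_{T,c}(W)\to I\to 0$ we get the left exact sequence
\[
0\to \Hom(I,R)\to \Hom(H^*_{T,c}(U),R)\oplus\Hom(H^*_{T,c}(W),R)\to \Hom(H^*_{T,c}(U\cap W),R).
\]
Composing these two maps shows that the bottom row is a complex, since it is the dual of a complex, and that its first map is injective. This is exactness at $\Hom_R(H^*_{T,c}(G),R)$, which is all the statement claims. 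I would remark that exactness in the middle generally fails, since $\Hom(I,R)$ can be strictly larger than $\Hom(H^*_{T,c}(G),R)$. For commutativity it is convenient that the bottom maps are precisely the duals of the extension-by-zero maps, up to the sign convention used in Proposition \ref{prop:exactcptandnot}.

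Commutativity is checked square by square, and the key input is that the integral is computed inside $H^*_T(\Gamma)$. Take the left square and $\alpha\in H^*_T(G)$ with $\beta\in H^*_{T,c}(U)$. Going down and then right gives $\int(\alpha\cdot \iota\beta)$, where $\iota\beta$ is the extension of $\beta$ by zero to $G$ and then to $\Gamma$. Going right and then down gives $\int(\alpha|_U\cdot\beta)$, again extended by zero. These are the same vector in $R^{V(\Gamma)}$: on $V(U)$ both equal $\alpha_v\beta_v$, and elsewhere both are $0$. The right square is identical, using $\alpha|_{U\cap W}\cdot\gamma=\alpha\cdot\iota\gamma$ for $\gamma\in H^*_{T,c}(U\cap W)$.

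One point needs attention: the product $\alpha\cdot\beta$ must actually lie in $H^*_{T,c}(U)$. The congruences along edges of $U$ hold because both factors satisfy them. The divisibility condition for edges leaving $U$ holds because $\beta_v$ is already divisible by the relevant label. Hence $D_U$ is well defined, and the integral makes sense by Theorem \ref{thm: ABBV}. Finally, the signs must be made to match: the top row uses (difference, sum) and the bottom row dualizes (sum, difference). I expect this sign bookkeeping to be the only real nuisance. I would fix it by choosing the sign conventions consistently, for example inserting a minus sign on the $W$-component, which does not affect exactness or the complex property.
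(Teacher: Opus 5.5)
Your proposal is correct and follows the paper's proof. You take the top row from Proposition~\ref{prop:exactcptandnot}, dualize the compact-support sequence, get injectivity from $\Hom_R(A,R)=0$ for the torsion cokernel $A$, and verify commutativity via the pointwise identity $\alpha\cdot\iota\beta=\iota(\alpha|_U\cdot\beta)$, which the paper leaves as ``easily checked''.

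Two of your asides are slightly off, though neither affects the argument. First, dualizing the paper's (sum, difference) gives (dual of difference, dual of sum) in that order, which already matches the top row's (difference, sum), so no sign correction is needed. Second, in the standing orientable setting the bottom row is in fact exact in the middle a posteriori, since Theorem~\ref{thm:PD} makes all three vertical maps isomorphisms. So ``generally fails'' is not right here; it merely does not follow formally from left exactness.
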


\begin{proof}
The diagram arises by applying $\Hom_R(-,R)$ to the second sequence in Proposition \ref{prop:exactcptandnot}. It is easily checked to be commutative. The exactness properties also follow from Proposition \ref{prop:exactcptandnot}: this proposition immediately implies that the upper row is exact and that the lower row is a complex; injectivity of \[\Hom_R(H^{*}_{T,c}(G),R) \rightarrow \Hom_R(H^{*}_{T,c}(U),R)\oplus \Hom_R(H^{*}_{T,c}(W),R)\] is assured by the facts that the cokernel $A$ of the original map is torsion and that $R$ is torsion-free.
\end{proof}

\begin{prop}\label{prop:PDbricks}
	The map $D_G$ is an isomorphism when $G$ is 
\begin{enumerate}[(i)]
\item a subgraph with only vertices and no edges.
\item a subgraph consisting only of an edge with its two adjacent vertices.
\end{enumerate}	
\end{prop}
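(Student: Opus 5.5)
Both cases come down to computing the two modules explicitly and checking that the pairing matrix is invertible over $R$. Fix the lift $\tilde\alpha$ and base vertex $v_0$ used to define $\int$. For $v\in V(\Gamma)$ write $P_v:=\prod_{e\in E_v}\tilde\alpha(e)$, so that $\Th_v$ is the class equal to $P_v$ at $v$ and $0$ elsewhere.

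\emph{Case (i).} Let $G$ have vertex set $V(G)$ and no edges. Then $H^*_T(G)=R^{V(G)}$ has no conditions at all. The compact support condition at $v$ requires $\tilde\alpha(e)\mid f_v$ for every $e\in E_v$. The labels at $v$ are pairwise linearly independent, so they are pairwise coprime in the UFD $R$. Hence $H^*_{T,c}(G)=\bigoplus_{v\in V(G)} R\cdot P_v$, the free module on the Thom classes $\Th_v$, $v\in V(G)$. For $\alpha=(a_v)$ the functional $D_G(\alpha)$ sends $\Th_v$ to $\int(\alpha\Th_v)=\Psi(v)a_v$. Since $\Psi(v)=\pm1$, the map $D_G$ is the identity matrix up to signs in these bases, and therefore an isomorphism.

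\emph{Case (ii).} Let $G$ be the edge $h$ from $w$ to $v$, and set $\beta:=\tilde\alpha(h)$. Then
\[
H^*_T(G)=\{(a,b)\in R^2 : \beta\mid a-b\}.
\]
This is free with basis $(1,1)$ and $(\beta,0)$. For $H^*_{T,c}(G)$, label the edges at $w$ by $\alpha_1=\beta,\alpha_2,\dots,\alpha_d$ and at $v$ by $\alpha_1'=\beta,\alpha_2',\dots,\alpha_d'$. The conditions are $\alpha_2\cdots\alpha_d\mid f_w$, $\alpha_2'\cdots\alpha_d'\mid f_v$ and $\beta\mid f_w-f_v$. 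I would show that this module is free with basis $\Th_h$ (from Definition \ref{D: Thom class edge}) and $\Th_w$ (equivalently $\Th_v$).

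To see this, take $(f_w,f_v)$ in $H^*_{T,c}(G)$ and write $f_w=p\,\alpha_2\cdots\alpha_d$ and $f_v=q\,\alpha_2'\cdots\alpha_d'$. Reducing modulo $\beta$ and using $\alpha_i'\equiv\epsilon_i\alpha_i$, the congruence $f_w\equiv f_v$ becomes
\[
p\equiv q\prod_{i\ge 2}\epsilon_i \equiv -\eta(h)\,q \pmod{\beta},
\]
because $\alpha_2\cdots\alpha_d$ is nonzero modulo the prime $\beta$. Therefore
\[
(f_w,f_v)-q'\,\Th_h \in R\cdot\Th_w
\]
for a suitable $q'=\pm q$, where the sign depends on which endpoint is $i(h)$. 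Linear independence of $\Th_h$ and $\Th_w$ is clear, since $\Th_w$ vanishes at $v$ while $\Th_h$ does not.

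Next I compute the pairing matrix of $D_G$ in these bases:
\[
\int (1,1)\cdot\Th_w=\pm1,\qquad \int (\beta,0)\cdot\Th_w=\pm\beta,\qquad \int (\beta,0)\cdot \Th_h=\pm1 .
\]
The last value holds because $\beta\,\Th_h$ is supported at $w$ and equals $\pm P_w$ there. The remaining entry $\int (1,1)\cdot\Th_h$ has degree $2d-2-2d<0$, so it vanishes by Theorem \ref{thm: ABBV}. The matrix is thus triangular with unit diagonal entries $\pm1$, and its determinant is a unit in $R$, so $D_G$ is an isomorphism.

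The main obstacle I expect is the sign bookkeeping in case (ii). I need to show that the relation between $p$ and $q$ produced by the congruence is exactly the one built into $\Th_h$ via $\eta(h)$. I would handle this by reusing the computation from the proof of Theorem \ref{thm: ABBV}, which already expresses $\prod\epsilon_i$ through $\eta(h)$. Once the basis of $H^*_{T,c}(G)$ is established, the pairing computation itself is immediate.
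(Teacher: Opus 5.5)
Your proof is correct and is essentially the paper's argument: in case (i) you use the vertex-indicator basis and the Thom classes $\mathrm{Th}_v$, and in case (ii) the basis $1$, the class equal to $\beta$ at one endpoint and $0$ at the other, $\mathrm{Th}_h$ and a vertex Thom class, together with a pairing matrix of unit determinant. The sign bookkeeping you flag as the main obstacle is unnecessary. The paper divides the value of $f$ at one endpoint by the value of $\mathrm{Th}_h$ there and subtracts that multiple of $\mathrm{Th}_h$; the remainder still lies in $H^*_{T,c}(G)$ and vanishes at that endpoint, so the edge congruence plus the compact-support condition force it to be a multiple of the other endpoint's Thom class. By pairing with the Thom class at the vertex where $g$ vanishes, the paper also gets a diagonal rather than a triangular matrix.
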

\begin{proof}
	In case $G$ contains only pairwise distinct vertices $v_1,\ldots,v_k$ then an $R$-basis for the free $R$-module $H_T^*(G)$ is given by the elements $1_{v_i}$ which are $1$ at $v_i$ and $0$ everywhere else. An $R$-basis of $H_{T,c}^*(G)$ is given by the Thom classes $\mathrm{Th}_{v_i}$ of the vertices $v_i$. Case $(i)$ now follows from the fact that
	\[\int 1_{v_i}\cdot \mathrm{Th}_{v_j}\]
	is $\pm 1$ if $i=j$ and $0$ if $i\neq j$.
	 Concerning $(ii)$, let $G$ be a subgraph consisting of a single edge $e$ with $i(e)=v$, $t(e)=w$. Then $H^{*}_{T,c}(G)$ is the free $R$-module generated by ${\mathrm{Th}}_v$ and the Thom class $\mathrm{Th}_e$ of the edge $e$: indeed the two are clearly linearly independent. To see that they generate any $f\in H_{T,c}^*(G)$ observe that $f_w$ is divisible by all $\alpha(e')$ for $e\neq e'\in E_w$ and in particular by $(\mathrm{Th}_e)_w$.
	Thus we find some $\beta\in R$ such that $f-\beta \mathrm{Th}_e$ is concentrated at $v$. In particular this difference is a multiple of $\mathrm{Th}_v$. On the other hand $H^{*}_T(G)$ is freely generated by $1$ and the class $g$ with $g_v=0$ and $g_w=\alpha(e)$. Now compute
	\begin{align*}
	\int 1 \cdot \mathrm{Th}_e=0,\quad \int 1\cdot \mathrm{Th}_v =\pm 1,\quad \int g \cdot \mathrm{Th}_e =\pm 1,\quad \int g\cdot \mathrm{Th_v}= 0
	\end{align*}
	which implies $(ii)$.
\end{proof}

\begin{proof}[Proof of Theorem \ref{thm:PD}]
We proceed inductively by starting with a single vertex and successively add edges to it. Write $G=U\cup W$ where by assumption $D_U$ is an isomorphism and $W$ is a single edge graph not contained in $U$. Then $U\cap W$ consists of one or two single vertices. By Proposition \ref{prop:PDbricks} the maps $D_W$ and $D_{U\cap W}$ are isomorphisms. An elementary diagram chase in the diagram of Lemma \ref{lem:MVSdiagram} proves that $D_G$ is an isomorphism.
\end{proof}

The statement also implies ordinary Poincaré duality, in case equivariant graph cohomology is a free module.

\begin{defn}\label{defn: PD}
A graded commutative algebra $A^*$ over a field $k$ is said to satisfy ($n$-dimensional) Poincaré duality if there is an $\epsilon\in \Hom_k(A^n,k)$ such that the map $a\mapsto (b\mapsto \epsilon(ab))$ defines an isomorphism $A^*\cong \Hom_k(A^*,k)$.
\end{defn}
\begin{cor}\label{cor:nonequivPD}
	Let $(\Gamma,\alpha)$ be an orientable $d$-valent GKM graph such that $H^*_T(\Gamma)$ is a free $H^*(BT)$-module. Then $H^*(\Gamma)$ satisfies ($2d$-dimensional) Poincaré duality.
\end{cor}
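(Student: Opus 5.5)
Apply Theorem~\ref{thm:PD} with $G=\Gamma$ and then reduce modulo $R^+$. Since $\Gamma$ has no edges leaving it, the compact-support condition is vacuous, so $H^*_{T,c}(\Gamma)=H^*_T(\Gamma)$. Write $M:=H^*_T(\Gamma)$. Theorem~\ref{thm:PD} then says that the pairing $M\times M\to R$, $(a,b)\mapsto \int ab$, which has degree $-2d$, induces an isomorphism $D_\Gamma\colon M\to \Hom_R(M,R)$.

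By hypothesis $M$ is a free graded $R$-module. Since $M$ is a finitely generated submodule of $R^{V(\Gamma)}$ and $R$ is Noetherian, $M$ has a finite homogeneous basis $m_1,\dots,m_r$. Let $G=(\int m_im_j)_{i,j}$ be the Gram matrix, with entries in $R$. The statement that $D_\Gamma$ is an isomorphism means precisely that $G$ is invertible over $R$, i.e.\ $\det G\in R^\times=\QQ^\times$; note $\det G$ is homogeneous because $G$ is a matrix of homogeneous entries with the usual degree pattern.

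Reduce modulo $R^+$. The classes $\bar m_i$ form a $\QQ$-basis of $H^*(\Gamma)=M/R^+M$. Define $\epsilon\colon H^{2d}(\Gamma)\to\QQ$ as the map induced by $\int$, which is well defined by $R$-linearity, as noted after Theorem~\ref{thm: ABBV}. The reduced Gram matrix $\bar G=(\epsilon(\bar m_i\bar m_j))$ has determinant equal to $\det G \bmod R^+$. Since $\det G$ is a nonzero constant, this determinant is nonzero, so $\bar G$ is invertible over $\QQ$.

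Hence $a\mapsto(b\mapsto\epsilon(ab))$ is an isomorphism $H^*(\Gamma)\to\Hom_\QQ(H^*(\Gamma),\QQ)$. Here $\epsilon(ab)$ is understood to be zero unless $\deg a+\deg b=2d$, which matches the fact that $\int$ vanishes outside degree $2d$ modulo $R^+$. This is exactly Definition~\ref{defn: PD} with $n=2d$.

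The main obstacle is making the base change rigorous: one must check that $\Hom_R(M,R)\otimes_R\QQ\cong\Hom_\QQ(M\otimes_R\QQ,\QQ)$ compatibly with the pairing. This holds because $M$ is free of finite rank, and that is where the freeness hypothesis is used; the Gram-matrix argument above is one concrete way to carry it out.
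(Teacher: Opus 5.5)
Your proof is correct and follows the same route as the paper: apply Theorem~\ref{thm:PD} with $G=\Gamma$ (where $H^*_{T,c}(\Gamma)=H^*_T(\Gamma)$) and reduce modulo $R^+$, using freeness to identify $\Hom_R(H^*_T(\Gamma),R)/R^+$ with $\Hom_\QQ(H^*(\Gamma),\QQ)$. Your Gram-matrix determinant argument is the coordinate version of the paper's step, where the pairing on $H^*(\Gamma)$ is the composition $\overline{p}\circ\overline{D_\Gamma}$ of two isomorphisms.
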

\begin{proof} For any $R$-module $M$, we write $M/R^+$ for the quotient $M/(R^+\cdot M)$. We identify $\QQ=R/R^+$ and consider the image of $\int$ under the canonical map $p\colon\Hom_R(H_T^*(\Gamma),R)\rightarrow \Hom_\QQ(H^*(\Gamma),\QQ)$. The corresponding duality pairing in the sense of Definition \ref{defn: PD} is the composition
\[H^*(\Gamma)= H_T^*(\Gamma)/R^+\longrightarrow \Hom_R(H_T^*(\Gamma),R)/R^+\longrightarrow \Hom_\QQ (H^{*}(\Gamma),\QQ).\]
where, the first arrow is the map $\overline{D_\Gamma}$ induced by $D_\Gamma$, and the second arrow is the map $\overline{p}$ induced by $p$. It follows from Theorem \ref{thm:PD} that $\overline{D_\Gamma}$ is an isomorphism. Furthermore $\overline{p}$ is an isomorphism because $H_T^*(\Gamma)$ is a free $R$-module.
\end{proof}

Recall that the freeness assumption is automatically satisfied in case $T$ is of dimension $2$ (see Remark \ref{rem:cohomfreedimT=2}). 

\begin{ex} \label{ex: orientable non-free} 
	In case equivariant graph cohomology is not a free module, this statement is not necessarily true. The example of the GKM graph of the flag manifold ${\mathrm{SU}}(3)/T^2$ with $T^3$-labeling given in \cite[Section 5]{GoertschesSolomadin} is a counterexample.  Let us give another example of the same type; the arguments to show orientability and to refute Poincaré duality apply equally to the two examples. Consider the GKM graph
	\begin{center}
		\begin{tikzpicture}

			\node (a) at (0,0)[circle,fill,inner sep=2pt] {};
			\node (b) at (3,0)[circle,fill,inner sep=2pt]{};
			\node (c) at (4.5,1.5)[circle,fill,inner sep=2pt]{};	
			\node (d) at (1.5,1.5)[circle,fill,inner sep=2pt]{};
			
			\node (au) at (0,3)[circle,fill,inner sep=2pt] {};
			\node (bu) at (3,3)[circle,fill,inner sep=2pt]{};
			\node (cu) at (4.5,4.5)[circle,fill,inner sep=2pt]{};	
			\node (du) at (1.5,4.5)[circle,fill,inner sep=2pt]{};

			%\node at (3,-1) {$(0,1)$};

			\draw [very thick, blue](a) to (b);
			\draw [very thick, blue](c) to (d);
			\draw [very thick, blue](au) to (du);
			\draw [very thick, blue](cu) to (bu);
			
			\draw [very thick, orange](a) to (d);
			\draw [very thick, orange](c) to (b);
			\draw [very thick, orange](au) to (bu);
			\draw [very thick, orange](cu) to (du);
			
			\draw [very thick](a) to (au);
			\draw [very thick](b) to (bu);
			\draw [very thick](c) to (cu);
			\draw [very thick](d) to (du);
			
		\end{tikzpicture}
\end{center}
where the colors black, blue, and orange correspond to labels via a basis $x,y,z$ of $H^2(BT^3;\ZZ)$. This GKM graph is orientable: we choose for the lift the same sign for the labels of all edges of the same color and the unique connection which respects labels. Then all signs in the congruences will be positive and hence $\eta(e)=-1$ for all $e\in E(\Gamma)$. Since every closed edge path has an even number of edges, orientability follows.

On the other hand $H^*(\Gamma)$ does not satisfy Poincaré duality. This follows e.g.\ by computing $\dim_\QQ H^2(\Gamma)=1$ in combination with the fact that $\dim_\QQ H^*(\Gamma)\geq \rk_R H_T^*(\Gamma)= |V(\Gamma)|=8$. To do the calculation, observe that the 4 congruences within a side face (those involving all three colors) dictate that for a class in $H^2(\Gamma)$,  the $y$ and $z$ coordinates have to be constant within a side face and hence everywhere. Furthermore the $x$ coordinate has to be constant within the top and bottom face respectively. Hence $H^2(\Gamma)$ is generated by the element which is $0$ on all vertices in the top face and $x$ on the vertices in the bottom face.
\end{ex}

\begin{cor}
	For an orientable $d$-valent GKM graph $(\Gamma,\alpha)$ such that $H^*_T(\Gamma)$ is a free $H^*(BT)$-module, $H^{n}(\Gamma)=0$ for $n>2d$.
\end{cor}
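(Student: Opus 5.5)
This follows from Corollary \ref{cor:nonequivPD}. Under the stated hypotheses, $H^*(\Gamma)$ satisfies $2d$-dimensional Poincaré duality in the sense of Definition \ref{defn: PD}. So there is a linear form $\epsilon\in\Hom_\QQ(H^{2d}(\Gamma),\QQ)$ for which the map $a\mapsto(b\mapsto\epsilon(ab))$ is an isomorphism $H^*(\Gamma)\cong\Hom_\QQ(H^*(\Gamma),\QQ)$. The plan is to exploit the grading of this isomorphism. We extend $\epsilon$ by zero to all degrees other than $2d$, which is the implicit convention in Definition \ref{defn: PD}. Then for homogeneous $a\in H^n(\Gamma)$, the functional $b\mapsto\epsilon(ab)$ vanishes on all homogeneous $b$ except those of degree $2d-n$.

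Now take $n>2d$. Then $2d-n<0$, and $H^{2d-n}(\Gamma)=0$, because graph cohomology is a quotient of $H^*_T(\Gamma)\subset R^{V(\Gamma)}$ and $R$ has no elements of negative degree. Hence for every $a\in H^n(\Gamma)$ the functional $b\mapsto\epsilon(ab)$ vanishes on every homogeneous $b$. Since it is linear, it vanishes identically. Injectivity of the duality map then forces $a=0$, so $H^n(\Gamma)=0$.

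We can also argue directly at the equivariant level, which bypasses the bookkeeping in the definition. The map $D_\Gamma$ has degree $-2d$, and $\Hom_R(H^*_{T}(\Gamma),R)$ vanishes in degrees below $-2d$ after the shift, since $H^*_T(\Gamma)$ is concentrated in nonnegative degrees and $R$ is nonnegatively graded. Under the freeness assumption, the proof of Corollary \ref{cor:nonequivPD} identifies $H^*(\Gamma)$ with $\Hom_\QQ(H^*(\Gamma),\QQ)$ shifted by $2d$. The latter is concentrated in degrees $0,\dots,2d$, and this again gives the claim.

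The only point requiring care is the grading convention: we must check that the duality isomorphism of Corollary \ref{cor:nonequivPD} is degree-compatible in the sense above. This is immediate from the construction, since $\int$ maps $H_T^m(\Gamma)$ into $R^{m-2d}$ by Theorem \ref{thm: ABBV}, so after reducing modulo $R^+$ the pairing is nonzero only in total degree $2d$.
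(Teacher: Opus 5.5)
Your argument is correct and matches the paper's reasoning. The paper states this corollary without a separate proof, as an immediate consequence of Corollary \ref{cor:nonequivPD}: $2d$-dimensional Poincaré duality gives $H^n(\Gamma)\cong\Hom_\QQ(H^{2d-n}(\Gamma),\QQ)=0$ for $n>2d$, and your first paragraph proves exactly this via injectivity of the duality map. Your equivariant second paragraph is unnecessary, and it is imprecise: the vanishing of $\Hom_R(H^*_T(\Gamma),R)$ below degree $-2d$ comes from $D_\Gamma$ being an isomorphism, not from the nonnegative gradings alone.
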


\begin{rem}
	Under the more restrictive assumption of a $T^2$-GKM graph immersed in $\RR^2$ in general position (cf.\ Remark \ref{rem:Luoassumptions} below), this statement was shown in \cite[Proposition 4.22]{Luo}. 
\end{rem}

\section{The orientation covering}\label{sec:orientationcovering}

Consider an abstract GKM graph $(\Gamma,\alpha)$. We fix a compatible connection $\nabla$ as well as a lift $\tilde\alpha:E(\Gamma)\to \ZZ^k$ of the axial function.

Define a new graph $\tilde{\Gamma}$ in the following way: its vertex set is
\[
V(\tilde{\Gamma}) = \{(v,\epsilon)\mid v\in V(\Gamma),\ \epsilon=\pm 1\},
\]
and for every edge $e\in E(\Gamma)$ connecting $v$ and $w$ we draw one edge in $\tilde{\Gamma}$ connecting $(v,1)$ with $(w,\eta(e))$ and one edge connecting $(v,-1)$ with $(w,-\eta(e))$.

The graph $\tilde{\Gamma}$ admits a  natural map $\tilde{\Gamma}\to \Gamma$, and we introduce (a lift of) an axial function $\beta$ and 
a connection (again denoted $\nabla$) of $\tilde{\Gamma}$ which intertwines with the (lift of the) axial function and the connection on $\Gamma$.

\begin{thm}\label{thm:orientabilitycoveringoriented}
	The graph $\tilde{\Gamma}$ is connected if and only if $\Gamma$ is not orientable. In this case $\tilde{\Gamma}$ is an orientable GKM graph.
\end{thm}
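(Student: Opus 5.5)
The proof splits into a purely graph-theoretic connectivity statement and a verification of the GKM and orientability conditions on $\tilde\Gamma$. The whole argument rests on one observation. By construction, an edge path $e_1,\ldots,e_n$ in $\Gamma$ starting at $v$ lifts uniquely to a path in $\tilde\Gamma$ starting at $(v,\epsilon)$. This lift ends at $(w,\epsilon\prod_i\eta(e_i))$. So $\tilde\Gamma\to\Gamma$ is a two-sheeted covering of graphs, and its monodromy along a closed path $f_1,\ldots,f_m$ at $v$ is multiplication by $\prod\eta(f_i)$ on the fibre $\{(v,\pm1)\}$.

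\textbf{Connectivity.} Since $\Gamma$ is connected, every vertex of $\tilde\Gamma$ is joined to $(v_0,1)$ or to $(v_0,-1)$. Hence $\tilde\Gamma$ is connected if and only if $(v_0,1)$ and $(v_0,-1)$ are joined by a path. Any such path projects to a closed path at $v_0$ with $\eta$-product $-1$. Conversely, a closed path through any vertex with $\eta$-product $-1$ can be conjugated by a path from $v_0$, which does not change the product, and its lift from $(v_0,1)$ ends at $(v_0,-1)$. So $\tilde\Gamma$ is connected exactly when $\Gamma$ is not orientable in the sense of Definition~\ref{D: path orientability}.

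\textbf{GKM structure.} On $\tilde\Gamma$ I define the lift $\beta$ of the axial function by $\beta(\tilde e)=\tilde\alpha(e)$, where $e$ is the image of $\tilde e$. The connection is $\nabla_{\tilde e}\tilde e'=$ the unique lift at $t(\tilde e)$ of $\nabla_e e'$. This is well defined because the projection is a bijection on each star $E_{(v,\epsilon)}\to E_v$. The identities $\nabla_{\tilde e}(\tilde e)=\overline{\tilde e}$ and $\nabla_{\overline{\tilde e}}=\nabla_{\tilde e}^{-1}$ follow from the same identities downstairs together with uniqueness of lifts. Linear independence of labels at a vertex and the congruences $\beta(\nabla_{\tilde e}\tilde e')\in \varepsilon\beta(\tilde e')+\ZZ\beta(\tilde e)$ are local conditions and are inherited verbatim. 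One needs $\tilde\Gamma$ to have no loops: an edge from $(v,\epsilon)$ to $(v,\epsilon')$ would come from a loop in $\Gamma$. Valence is preserved.

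\textbf{Orientability.} Since $\varepsilon_i$ and $k_i$ at $\tilde e$ coincide with those at $e$, the sign function $\tilde\eta$ for $(\tilde\Gamma,\beta,\nabla)$ is $\tilde\eta(\tilde e)=\eta(e)$. Now take a closed path in $\tilde\Gamma$ based at $(v,\epsilon)$. Its projection is a closed path in $\Gamma$ whose lift from $(v,\epsilon)$ returns to $(v,\epsilon)$. By the monodromy formula, the product of the $\eta(f_i)$ is $1$. So the $\tilde\eta$-product of every closed path is $1$, and $\tilde\Gamma$ is orientable; by Remark~\ref{rem: orientabilitydef}(i) the choice of lift and connection does not matter.

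\textbf{Main obstacle.} The step requiring care is checking that the connection is well defined and that the local data $\varepsilon_i,k_i$ really transfer. Concretely, $\nabla_e$ must be pulled back through the star bijections, and $\tilde\eta$ must be computed with the induced connection. Everything else is the standard covering-space argument transplanted to graphs.
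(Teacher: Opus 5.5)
Your proposal is correct and follows essentially the same argument as the paper. Unique path lifting with endpoint $(w,\epsilon\prod_i\eta(e_i))$ gives the connectivity criterion (a path from $(v,1)$ to $(v,-1)$ amounts to a closed loop at $v$ with $\eta$-product $-1$), and together with $\tilde\eta(\tilde e)=\eta(e)$ for the lifted axial function and connection it also gives orientability of $\tilde\Gamma$. You make explicit some details the paper calls clear: the lifted connection is well defined, and $\tilde\Gamma$ has no loops. One fact you use only implicitly is $\eta(\overline{e})=\eta(e)$; it is needed both for the edge construction of $\tilde\Gamma$ to be consistent and for your conjugation step.
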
 
\begin{proof}
	Clearly the axial function $\beta$ and the connection $\nabla$ turn $(\tilde{\Gamma},\beta)$ into a GKM graph (up to the connectivity hypothesis such that if $\tilde e,\tilde e'\in E(\tilde \Gamma)$ are the two edges above an edge $e\in E(\Gamma)$, then $\eta(\tilde e)=\eta(e)=\eta(\tilde e')$. The existence of an edge path from $(v,1)$ to $(v,-1)$ in $\tilde{\Gamma}$ is equivalent to a closed loop at $v$ in $\Gamma$ where the $\eta(-)$ multiply to $-1$ along the way. This proves the first part of the statement. It only remains to see that $\tilde{\Gamma}$ is orientable. But by definition of the graph structure, the $\eta(-)$ along a closed loop need to multiply to $+1$.
\end{proof}

\begin{rem}
The orientability covering $\tilde{\Gamma}\rightarrow \Gamma$ does not depend on the choices of connection $\nabla$ and lift $\tilde\alpha$ made in the construction. To see this recall from \cite[Section 2.3]{2210.01856v1} that the product $\prod \eta(e_i)$ along a closed edge loop $c$ does not depend on $\tilde\alpha$ and $\nabla$. We denote this number also by $\eta(c)$. Equipped with this, consider a different $\nabla'$ and $\tilde\alpha'$ and the resulting orientability covering $\tilde{\Gamma}'$. The resulting signs for the edges are denoted $\eta'(-)$.

We fix a base vertex $v_0\in \Gamma$ and define a map $\varphi\colon V(\tilde{\Gamma})\rightarrow V(\tilde{\Gamma}')$ by identifying the fibers over $v_0$ and extending the map via unique path lifting. Concretely, for $(w,\epsilon)\in V(\tilde{\Gamma})$ choose a path $c$ from $w$ to $v_0$ and lift it to $\tilde{\Gamma}$ with starting point $(w,\epsilon)$. Its endpoint is $(v_0,\epsilon\eta(c))$. Now lift the reverse of $c$ to $\tilde{\Gamma}'$ with starting point $(v_0,\epsilon\eta(c))$ and define $\varphi(w,\epsilon)$ to be the endpoint of this path, which is explicitly given by $(w,\epsilon\eta(c)\eta'(c))$. If the map is well-defined then it will define an isomorphism of graphs covering the identity, and therefore preserving the labels.

To see that it is well-defined let $\tilde{c}$ be another path from $w$ to $v_0$. We have to argue that $\eta(c)\eta'(c)=\eta(\tilde{c})\eta'(\tilde{c})$. This is equivalent to $\eta(c)\eta(\tilde{c})=\eta'(c)\eta'(\tilde{c})$ and this holds by the initial discussion, as the concatenation of $c$ and $\tilde{c}$ is a closed edge path.
\end{rem}

{Let us assume that $\Gamma$ is not orientable. Then we} call $\tilde{\Gamma}$, together with the natural projection $\pi:\tilde{\Gamma}\to \Gamma$, the \emph{orientation covering} of $\Gamma$. It admits a unique nontrivial deck transformation $\sigma:\tilde{\Gamma}\to \tilde{\Gamma}$ of order two. It is an automorphism of the GKM graph $\tilde{\Gamma}$ in the sense that it is an automorphism of the underlying graph with the property that it sends every edge to an edge with the same label.

Therefore we can decompose $H^*_T(\tilde{\Gamma})$ into the $\pm 1$ eigenspaces of $\sigma$:
\[H^*_T(\tilde{\Gamma}) = \pi^*H^*_T(\Gamma)\oplus N \cong H^*_T(\Gamma)\oplus N
\] as $H^*(BT)$-modules, where $N=\{\omega\in H^*_T(\tilde{\Gamma})\mid \sigma^*\omega=-\omega\}$. Explicitly, $\pi^*H_T^*(\Gamma)$ consists of those $f$ where $f_{(v,1)}=f_{(v,-1)}$ and $N$ consists of those $f$ where $f_{(v,1)}=-f_{(v,-1)}$ for all $v$.

\begin{lem}\label{lem:coverfree}
If $H_T^*(\Gamma)$ is free then so are $N$ and $H_T^*(\tilde{\Gamma})$. 
\end{lem}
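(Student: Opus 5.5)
The approach is to reduce everything to the freeness of $N$, and to compare $N$ with $H^*_T(\Gamma)$ through a local criterion for freeness. Since $H^*_T(\tilde\Gamma)=\pi^*H^*_T(\Gamma)\oplus N$ and $\pi^*H^*_T(\Gamma)\cong H^*_T(\Gamma)$ is free by assumption, it suffices to show that $N$ is free.

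\emph{Step 1: describe $N$ as a twisted graph cohomology of $\Gamma$.} An element $f\in N$ is determined by $g_v:=f_{(v,1)}$, because $f_{(v,-1)}=-g_v$. The edge of $\tilde\Gamma$ joining $(v,1)$ and $(w,\eta(e))$ imposes $g_v\equiv \eta(e)g_w \bmod \alpha(e)$. The edge joining $(v,-1)$ and $(w,-\eta(e))$ imposes the same condition. Hence
\[
N\cong H^*_T(\Gamma;\eta):=\{(g_v)\in R^{V(\Gamma)}\mid \alpha(e) \text{ divides } g_{i(e)}-\eta(e)g_{t(e)} \text{ for all } e\in E(\Gamma)\}.
\]
This is the kernel of the twisted map $\varphi_\eta:\bigoplus_v R\to\bigoplus_e R/(\alpha(e))$, $(g_v)\mapsto (g_{i(e)}-\eta(e)g_{t(e)})_e$, exactly as in Remark~\ref{rem:cohomfreedimT=2}.

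\emph{Step 2: localize the freeness criterion.} A finitely generated graded $R$-module is free iff it is free after localizing at every homogeneous prime $\mathfrak p$, equivalently iff its depth at every such $\mathfrak p$ equals $\operatorname{ht}\mathfrak p$. Localizing the defining sequence $0\to H^*_T(\Gamma;\eta)\to \bigoplus_v R\to \bigoplus_e R/(\alpha(e))$ at $\mathfrak p$ kills every edge whose label is not in $\mathfrak p$. The localized module then splits as a direct sum over the connected components $C$ of the subgraph $\Gamma_{\mathfrak p}$ consisting of the edges with $\alpha(e)\in\mathfrak p$. The same holds for the untwisted $H^*_T(\Gamma)$. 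So freeness of either module is equivalent to freeness of the twisted, respectively untwisted, cohomology of every component $C$ after localization at $\mathfrak p$.

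\emph{Step 3: untwist on each component (the main obstacle).} On a component $C$, if the product of the $\eta(e)$ along every closed path in $C$ is $1$, then rescaling by signs $g_v\mapsto \Psi_C(v)g_v$ (with $\Psi_C$ defined by path products inside $C$, as in Section~\ref{sec:ABBV}) identifies the twisted and untwisted localized cohomologies of $C$. Freeness of $N_{\mathfrak p}$ then follows from freeness of $H^*_T(\Gamma)_{\mathfrak p}$.

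The difficulty is that $C$ may carry an orientation-reversing loop, for instance when $\mathfrak p=R^+$ and $C=\Gamma$. In that case I would try to avoid untwisting and instead run the Mayer--Vietoris inductions of Proposition~\ref{prop:exactcptandnot} simultaneously for the twisted and untwisted theories. The point is that the exact sequences there only use the congruences edge by edge, so they hold verbatim for the twisted theory. The building blocks of Proposition~\ref{prop:PDbricks} (vertices and single edges) are free of the same ranks in both theories, since a single edge can always be untwisted.

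If this comparison is not enough, the fallback is to exploit the cover $\tilde\Gamma$ itself. It is orientable by Theorem~\ref{thm:orientabilitycoveringoriented}, so Theorem~\ref{thm:PD} gives $H^*_T(\tilde\Gamma)\cong \Hom_R(H^*_{T,c}(\tilde\Gamma),R)$, which is reflexive. Combined with the fact that the $+1$-eigenspace is free of rank $|V(\Gamma)|$, this should force the $-1$-eigenspace $N$, also of rank $|V(\Gamma)|$, to be free. The remaining gap in this fallback is showing that reflexivity upgrades to freeness.
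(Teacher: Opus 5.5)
There is a genuine gap: none of your three routes shows that $N$ is free. Steps 2--3 reduce nothing. For a finitely generated graded $R$-module, freeness is already decided by the localization at $R^+$ (graded Nakayama). At $\mathfrak p=R^+$ every label lies in $\mathfrak p$, so $\Gamma_{\mathfrak p}=\Gamma$, which is non-orientable because the orientation covering is only considered in that case. The untwisting therefore fails at exactly the one prime that matters.

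The simultaneous Mayer--Vietoris comparison cannot work either. The sequences of Proposition~\ref{prop:exactcptandnot} are only left exact and do not transport freeness; every graph, free or not (e.g.\ Example~\ref{ex: orientable non-free}), is glued from the same vertex and edge bricks. Moreover, $N$ is genuinely not isomorphic to $H^*_T(\Gamma)$. A degree-$0$ twisted class satisfies $g_{i(e)}=\eta(e)g_{t(e)}$, hence vanishes along an orientation-reversing loop, so $N^0=0$ while $H^0_T(\Gamma)=\QQ$.

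Finally, the step ``reflexive $\Rightarrow$ free'' in your fallback is false for $k\ge 3$. The graph of Example~\ref{ex: orientable non-free} is orientable, so by Theorem~\ref{thm:PD} its $H^*_T$ is a dual module, hence reflexive. Yet it is not free, since otherwise Corollary~\ref{cor:nonequivPD} would give Poincaré duality. Nor do self-duality of $H^*_T(\tilde\Gamma)$ and freeness of $\pi^*H^*_T(\Gamma)$ suffice abstractly: they are compatible with $N$ containing a self-dual non-free summand.

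What is missing is how the pairing of Theorem~\ref{thm:PD} for $G=\tilde\Gamma$ (where $H^*_{T,c}(\tilde\Gamma)=H^*_T(\tilde\Gamma)$) interacts with the eigenspace splitting. Take $(v,1)$ as base vertex. The $\eta$-product along any path from $(v,1)$ to $(w,\epsilon)$ equals $\epsilon$, so $\Psi(w,-1)=-\Psi(w,1)$, and $\int$ vanishes on $\pi^*H^*_T(\Gamma)$. A product of two classes lying both in $\pi^*H^*_T(\Gamma)$ or both in $N$ lies in $\pi^*H^*_T(\Gamma)$, so the pairing is block off-diagonal. Hence the isomorphism $D_{\tilde\Gamma}$ restricts to isomorphisms $N\cong\Hom_R(H^*_T(\Gamma),R)$ and $H^*_T(\Gamma)\cong\Hom_R(N,R)$. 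Thus $N$ is free as the dual of a finitely generated free module, and so is $H^*_T(\tilde\Gamma)$.

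In the language of your (correct) Step~1, this says that $(h,g)\mapsto\sum_w h_wg_w/\prod_{e\in E_w}\tilde\alpha(e)$ (half of $\int_{\tilde\Gamma}\pi^*h\cdot g$) induces these isomorphisms between $H^*_T(\Gamma)$ and $H^*_T(\Gamma;\eta)$. Treating the two theories together can only succeed if it produces this duality between them, not a comparison.
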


\begin{proof}
As $\tilde{\Gamma}$ is orientable, integration is defined in the sense of Section \ref{sec:ABBV}. We claim that integration vanishes on elements in $\pi^*H_T^*(\Gamma)$. To see this it is enough to observe that $\Psi(w,1)=-\Psi(w,-1)$ with $\Psi$ defined as in Section \ref{sec:ABBV}, depending on a fixed base vertex $(v,1)$. Indeed, this holds since the product of the $\eta(-)$ along any edge path from $(v,1)$ to a vertex $(w,\epsilon)$ is necessarily $\epsilon$.

 We may use equivariant Poincaré duality, and obtain
\[
H^*_T(\Gamma)\oplus N \cong H^*_T(\tilde{\Gamma}) \cong \Hom_R(H^*_T(\tilde{\Gamma}),R)\cong \Hom_R (H^*_T(\Gamma),R)\oplus \Hom_R(N,R).
\]
We note that if $\omega$ and $\omega'$ are either both in $\pi^*H^*_T(\Gamma)$ or both in $N$, then their product lies in $\pi^*H_T^*(\Gamma)$ and hence $\int \omega\cdot \omega'=0$. It follows that the above isomorphism in fact splits as the direct sum of two isomorphisms
\[
H^*_T(\Gamma)\cong \Hom_R(N,R)\qquad \textrm{and} \qquad N\cong \Hom_R(H^*_T(\Gamma),R).
\]
So in particular, if $H^*_T(\Gamma)$ is a free module, also $N$ and hence also $H^*_T(\tilde{\Gamma})$ are free modules.
\end{proof}

\section{Orientability conditions}

We can now complete the

\begin{proof}[Proof of Theorem \ref{thm:orientabilities}]
	We proved in Theorem \ref{thm:PD} that for orientable $\Gamma$, equivariant Poincaré duality holds true, and as $H^*_T(\Gamma)$ is a free module, Corollary \ref{cor:nonequivPD} implies that $H^*(\Gamma)$ satisfies nonequivariant Poincaré duality as well. In particular, $\dim H^{2d}(\Gamma)=\dim H^0(\Gamma)=1$, and by Corollary \ref{cor:Thomnonzero} this vector space is generated by the Thom class of any vertex of $\Gamma$. This implies that Condition 1 implies all other three conditions.
	
	It remains to show that for nonorientable $\Gamma$, $H^{2d}(\Gamma)=0$, as then none of the conditions 2-4 can hold true. To this end, we consider the orientation covering $\pi:\tilde{\Gamma}\to \Gamma$ (after fixing a lift $\tilde\alpha$ of $\alpha$). We have that $\tilde{\Gamma}$ is orientable by Theorem \ref{thm:orientabilitycoveringoriented}, and has free equivariant cohomology, as shown in Lemma \ref{lem:coverfree}. So by the above, it follows that $H^{2d}(\tilde{\Gamma})\cong \QQ$, generated by the Thom class of any vertex. With respect to the decomposition $H_T^*(\tilde{\Gamma}) = \pi^* H_T^*(\Gamma)\oplus N$ from Section \ref{sec:orientationcovering} we get
	
\[\mathrm{Th}_{(v,1)}=\frac{\pi^*\mathrm{Th}_v}{2}+\frac{\mathrm{Th}_{(v,1)}-\mathrm{Th}_{(v,-1)}}{2}.\]
 Now the first summand projects to $0$ in $H^*(\tilde\Gamma)$ as it comes from $H_T^*(\Gamma)$ and the Thom class of a vertex in a nonorientable GKM graph vanishes in nonequivariant graph cohomology by \cite[Proposition 2.23]{2210.01856v1}. In particular this implies that $N/(R^+\cdot N)$ has a generator in degree $2d$ while $H^*(\Gamma)$ has none.
\end{proof}

\begin{rem}\label{rem:Luo1}
	The fact that top nonequivariant graph cohomology $H^{2d}(\Gamma)$ is either zero- or one-dimensional was, under the more restrictive assumption of a $T^2$-GKM graph immersed in $\RR^2$ in general position (cf.\ Remark \ref{rem:Luoassumptions} below), shown in \cite[Proposition 4.22]{Luo}. 
\end{rem}

\section{Connectivity of GKM graphs}

Recall the following definitions:
\begin{defn}
	A connected graph $\Gamma$ is \emph{$k$-edge connected} if it has the property that after the removal of any $l$ edges, $l< k$, it remains connected. It is \emph{$k$-vertex connected} if it has the property that after the removal of any $l$ vertices and its adjacent edges, $l<k$, it remains connected.
\end{defn}

\begin{lem}\label{lem:2vertex2edge}
	A $2$-vertex-connected graph different from the graph with two vertices and a single edge connecting them is also $2$-edge-connected.
\end{lem}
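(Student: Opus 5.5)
We argue by contradiction. Let $\Gamma$ be $2$-vertex-connected and not the single-edge graph, and suppose some edge $e$ joining $v$ and $w$ is a bridge, i.e.\ $\Gamma\setminus e$ is disconnected. Since $\Gamma$ is connected, removing one edge leaves exactly two components $C_v\ni v$ and $C_w\ni w$. If several edges join $v$ and $w$, removing one of them cannot disconnect $\Gamma$, so we may assume $e$ is the only edge between $v$ and $w$.

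\textbf{Case 1: one component has at least two vertices.} Say $C_v$ contains a vertex $u\neq v$. Remove the vertex $v$ together with its adjacent edges. Every path in $\Gamma$ from $u$ to $w$ must use the bridge $e$, so it passes through $v$. Hence after deleting $v$ the vertices $u$ and $w$, which both remain, lie in different components. This contradicts $2$-vertex-connectivity. The case where $C_w$ has at least two vertices is symmetric, deleting $w$ instead.

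\textbf{Case 2: both components are single vertices.} Then $V(\Gamma)=\{v,w\}$, and by the reduction above $e$ is the only edge. So $\Gamma$ is the graph with two vertices and one edge, which is excluded.

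Hence no edge is a bridge. Together with connectivity, this shows that $\Gamma$ is $2$-edge-connected. The only point needing care is handling multiple edges and the degenerate two-vertex graph, and both are dealt with above.
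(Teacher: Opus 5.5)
Your proof is correct and is essentially the paper's argument in contrapositive form. Both rest on the fact that $\Gamma$ stays connected after deleting an endpoint $v$ of $e$: the paper uses this directly to build a detour $v \to w' \to \cdots \to w$ through a second neighbour $w'\neq w$ of $v$, whereas you show that a bridge $e$ with another vertex on $v$'s side would make $v$ a cut vertex, and your case split also handles multiple edges and the two-vertex case cleanly.
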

\begin{proof}The claim is clear for a graph with only two vertices (and arbitrarily many edges), so let $\Gamma$ be a $2$-vertex-connected graph with more than two vertices. Let $e$ be an edge of $\Gamma$, connecting vertices $v$ and $w$. By assumption, there is another edge $e'$ at $v$, with terminal vertex $w'\neq w$. As $\Gamma$ remains connected when removing $v$ and all its adjacent edges, there is a path in $\Gamma$ from $w$ to $w'$ avoiding all edges adjacent to $v$. Concatenating this path with $e'$ we have found a path connecting $v$ and $w$ avoiding $e$, hence $\Gamma$ is $2$-edge-connected.
\end{proof}

\begin{thm}\label{thm:connectivity}
	Any orientable $d$-valent GKM graph $(\Gamma,\alpha)$ is $2$-vertex-connected. For $d\geq 2$ it is also $2$-edge-connected.
\end{thm}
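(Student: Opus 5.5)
The plan is to argue by contradiction using the equivariant Poincaré duality of Theorem \ref{thm:PD} for suitable subgraphs, together with the Mayer--Vietoris diagram of Lemma \ref{lem:MVSdiagram}. The edge statement then follows from Lemma \ref{lem:2vertex2edge}. For $d\geq 2$ the graph cannot be the single-edge graph on two vertices, because that graph is $1$-valent, and vertex-connectivity passes to edge-connectivity.

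Suppose that removing a vertex $v$ with its adjacent edges disconnects $\Gamma$. Let $C_1$ be one component of $\Gamma\setminus v$, and let $C_2$ be the union of the remaining components. Set $U$ to be $C_1$ together with $v$ and all edges between $v$ and $C_1$, and $W$ analogously with $C_2$. Then $\Gamma=U\cup W$ and $U\cap W=\{v\}$, a single vertex with no edges. Applying Lemma \ref{lem:MVSdiagram} to this decomposition, the maps $D_U$, $D_W$, $D_{\Gamma}$ and $D_{\{v\}}$ are all isomorphisms by Theorem \ref{thm:PD} and Proposition \ref{prop:PDbricks}. The idea is to produce, via duality, a degree-$0$ class in $H^*_T(U)$ which equals $1$ on $C_1$ and $0$ at $v$. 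Such a class cannot exist: an edge $e$ from $v$ into $C_1$ (there is one, since $\Gamma$ is connected) would force the nonconstant label $\alpha(e)$ to divide $1-0$.

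To produce this class I would define an $R$-linear functional of degree $-2d$ on $H^*_{T,c}(U)$, namely
\[
\psi(\beta)=\sum_{w\in V(C_1)}\Psi(w)\frac{\beta_w}{\prod_{e\in E_w}\tilde\alpha(e)}.
\]
This is the ABBV integral with the contribution of $v$ omitted. The main step is to show that $\psi$ is polynomial-valued. I would redo the pairing argument from the proof of Theorem \ref{thm: ABBV}. Edges inside $C_1$ pair up exactly as before. An edge $h$ from $w\in C_1$ to $v$ has its would-be partner term at $v$ missing, but the compact support condition on $U$ says that $\beta_w$ is divisible by the labels of edges leaving $U$. I would try to combine this with the congruence $\beta_w\equiv\beta_v \bmod \tilde\alpha(h)$. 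The difficulty is that $\beta_v$ need not vanish, since $v$ lies in $U$.

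So instead I would rather pass to the subgraph $G=C_1\cup\{v\}$ together with only those edges, and compare $\psi$ with $D_U(1)$. If $\psi\neq D_U(1)$, then $D_U(1)-\psi$ is $\beta\mapsto \Psi(v)\beta_v/\prod_{E_v}\tilde\alpha$. Once polynomiality is established, surjectivity of $D_U$ gives a class $x\in H^0_T(U)=\QQ\cdot 1$ with $D_U(x)=\psi$. Evaluating on $\mathrm{Th}_v$ and on a Thom class $\mathrm{Th}_w$ with $w\in C_1$ then gives $\psi(\mathrm{Th}_v)=0$ but $\psi(\mathrm{Th}_w)=\pm1$, which is incompatible with $x$ being a constant.

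I expect the main obstacle to be exactly this polynomiality of the truncated integral $\psi$, that is, controlling the boundary terms along the edges joining $C_1$ to $v$. If the direct check fails, my fallback is to obtain the functional abstractly from exactness in the bottom row of Lemma \ref{lem:MVSdiagram}. The pair $(D_U(1),0)$ and $(0,D_W(1))$ have images that agree up to sign at $D_{\{v\}}(1)$. I would then use injectivity of $D_\Gamma$, together with the fact that $H^0_T(\Gamma)=\QQ$ while $H^0_T(U)\oplus H^0_T(W)\to H^0_T(\{v\})$ has a two-dimensional source, to extract a degree-$0$ class on $U$ vanishing at $v$. This again contradicts the edge congruence.
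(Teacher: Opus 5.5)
\textbf{There is a genuine gap in the vertex part.} Your reduction of the edge statement to the vertex statement via Lemma \ref{lem:2vertex2edge} is fine. The problem is that the truncated integral $\psi$ is not $R$-valued on $H^*_{T,c}(U)$.

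Let $S_1,S_2\subset E_v$ be the edges from $v$ into $C_1$ and $C_2$ respectively. Both are nonempty. Put $P=\prod_{e\in S_2}\tilde\alpha(e)$. The class $\beta=P\cdot 1_U$ equals $P$ on $V(U)$ and $0$ on $C_2$. It lies in $H^*_{T,c}(U)\subset H^*_T(\Gamma)$, because the only support condition is at $v$, for the edges in $S_2$. Its degree is $2|S_2|<2d$, so $\int\beta=0$, and therefore
\[
\psi(\beta)=\int\beta-\Psi(v)\frac{P}{\prod_{e\in E_v}\tilde\alpha(e)}=-\frac{\Psi(v)}{\prod_{e\in S_1}\tilde\alpha(e)}\notin R .
\]
This is exactly the obstruction you sensed. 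The residue of $\psi$ along an edge from $w\in C_1$ to $v$ is governed by $\beta_v$ modulo that label. The support condition on $U$ does not control this, since it constrains $\beta_v$ only along the edges into $C_2$ and imposes nothing at $w\in C_1$. Note also that, as you show yourself, the mere existence of $\psi$ in $\Hom_R(H^*_{T,c}(U),R)$ would already give the contradiction. So ``establishing polynomiality'' is the whole content of the theorem, not a technical step, and a direct check cannot supply it.

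\textbf{The fallback does not work either.} Since all the $D$'s are isomorphisms, the bottom row of Lemma \ref{lem:MVSdiagram} is an isomorphic copy of the top row. In the relevant degree it reads $\QQ\to\QQ^2\to\QQ$, which is exact. The element you assemble from $(D_U(1),\pm D_W(1))$ is just $D_\Gamma(1)$. A degree-$0$ class on the connected graph $U$ that vanishes at $v$ is zero, and nothing in degree $0$ forces such a class to exist. The degree-$0$ data are simply consistent.

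\textbf{The missing idea is that the contradiction lives in degree $2d$.} On one side, orientability gives $H^{2d}(\Gamma)\neq 0$ (Corollary \ref{cor:Thomnonzero}), so $H^*_T(\Gamma)$ has a generator in degree $2d$. On the other side, $H^0_{T,c}(U)=0$, because $U$ is connected but misses an edge at $v$. Via the duality $D_U$, this shows $H^*_T(U)$, and likewise $H^*_T(W)$, has no generator in degree $2d$.

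These two facts are combined through the Mayer--Vietoris sequence
\[
0\to H^*_T(\Gamma)\to H^*_T(U)\oplus H^*_T(W)\to H^*_T(\{v\})\to 0,
\]
which is short exact because $(1,0)\mapsto 1$. One first reduces to a $T^2$-labeling; this does not change the underlying graph and preserves orientability. Then all modules are free (Remark \ref{rem:cohomfreedimT=2}), so the sequence splits and generator degrees of the dual modules can be read off. Your plan uses neither this top-degree class nor any freeness input, and without them no contradiction can be reached.
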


\begin{proof}[Proof of Theorem \ref{thm:connectivity}]
	The claim is obvious for $d=2$, so let us assume that $d\geq 3$. By Lemma \ref{lem:2vertex2edge} we only need to consider $2$-vertex-connectivity. As $2$-vertex-connectivity is unchanged when restricting to a smaller torus, we may assume that we consider $T^2$-labelings.
	
	Assume that $(\Gamma,\alpha)$ is an orientable $d$-valent GKM graph that is not $2$-vertex-connected, i.e., it has a vertex $v_0$ such that upon removal of $v_0$ and all its adjacent edges, $\Gamma$ decomposes into several connected components. In this situation we find two proper subgraphs $U$ and $W$ of $\Gamma$ with the property that $U\cap W$ consists only of the vertex $v_0$ {while $U,W$ are themselves connected}. The corresponding Mayer-Vietoris sequence (see Proposition \ref{prop:exactcptandnot})
	\[
	0 \longrightarrow H^*_T(\Gamma)\longrightarrow H^*_T(U)\oplus H^*_T(W)\longrightarrow H^*_T(U\cap W)\longrightarrow 0
	\]
	is short exact; indeed, $1\in H^*_T(U\cap W) \cong H^*(BT)$ is the image of $(1,0)\in H^*_T(U)\oplus H^*_T(W)$.
	
	As all modules involved are free (graded) $H^*(BT)$-modules (this uses that we are considering $T^2$-labelings, see Remark \ref{rem:cohomfreedimT=2}) the sequence splits. But this leads to a contradiction as, by Theorem \ref{thm:orientabilities}, the equivariant cohomology $H^*_T(\Gamma)$ of the orientable GKM graph $\Gamma$ has a generator in degree $2d$, while neither $H^*_T(U)$ nor $H^*_T(W)$ have a generator in this degree. The last claim is proved as follows: Observe first that $H_{T,c}^*(U)$ is trivial in degree $0$ because $U$ is connected but does not contain all edges emanating from $v_0$. Consequently, its dual module $\Hom_R(H^*_{T,c}(U),R)$ does not have a generator in degree $0$ (for a free graded $R$-module $M$ with generators in degrees $n_i$, its dual module $\Hom_R(M,R)$ has generators in degrees $-n_i$). By the Poincaré duality Theorem \ref{thm:PD}, $H^*_T(U) \cong \Hom_R(H^*_{T,c}(U),R)$ via the duality isomorphism $D_U$ of degree $-2d$, and it follows that $H^*_T(U)$ has no generators in degree $2d$.
\end{proof}

\begin{ex}\label{ex:not2connected}
	The following is an example of a non-orientable GKM graph that is not $2$-edge connected, which shows that the orientability assumption in Theorem \ref{thm:connectivity} is necessary.
	\begin{figure}[H]
		\begin{center}
			\begin{tikzpicture}
				\draw[very thick] (0,0) to[in = 105, out = 255] (0,-3);
				\draw[very thick] (0,0) to[very thick, in = 75, out = 285] (0,-3);
				\draw[very thick] (0,0) -- ++(1.5,-1.5) -- ++(-1.5,-1.5);
				\draw[very thick] (1.5,-1.5) -- ++ (2,0) -- ++ (1.5,1.5) -- ++(-1.5,-1.5) -- ++ (1.5,-1.5);
				\draw[very thick] (5,0) to[in = 105, out = 255] (5,-3);
				\draw[very thick] (5,0) to[very thick, in = 75, out = 285] (5,-3);
				\node at (.75,-1.5) {$(0,1)$};
				\node at (-.75,-1.5) {$(2,0)$};
				\node at (1.4,-.5) {$(1,-1)$};
				\node at (1.25,-2.5) {$(1,1)$};
				\node at (2.5,-1.1) {{$(1,0)$}};
				\node at (3.65,-.5) {$(1,-1)$};
				\node at (3.75,-2.5) {$(1,1)$};
				\node at (4.25,-1.5) {$(0,1)$};
				\node at (5.75,-1.5) {$(2,0)$};

				\node at (0,0)[circle, fill, inner sep = 2pt] {};
				\node at (0,-3)[circle, fill, inner sep = 2pt] {};
				\node at (1.5,-1.5)[circle, fill, inner sep = 2pt] {};
				\node at (3.5,-1.5)[circle, fill, inner sep = 2pt] {};
				\node at (5,0)[circle, fill, inner sep = 2pt] {};
				\node at (5,-3)[circle, fill, inner sep = 2pt] {};
			\end{tikzpicture}
		\end{center}
		\caption{A non-orientable GKM graph that is not $2$-edge connected}
		\label{fig:not2connected}
	\end{figure}

\end{ex}
\begin{ex}\label{ex:not3connected}
	Consider $\Gamma_0$, the ($n$-valent) GKM graph of a $T^2$-GKM action on $\CC P^{n}$. As an unlabeled graph, it is the complete graph on $n+1$ vertices. Let $\Gamma$ be the $(n+1)$-valent GKM graph which is constructed from four copies of $\Gamma_0$, as Figure \ref{fig:not3connected} demonstrates for the case $n=3$. The edges not contained in the four copies of $\Gamma_0$ shall all have the same label, which is arbitrary except that it shall be pairwise linearly independent from any of the labels of $\Gamma_0$. Moreover, they shall connect vertices whose adjacent edges carry the same  labels.
	
	\begin{figure}[H]
		\begin{center}
			\begin{tikzpicture}
				
				\node(a) at (0,0)[circle,fill,inner sep=2pt] {};
				\node(b) at (-2,.2)[circle,fill,inner sep=2pt] {};
				\node(c) at (-1.2,1.7)[circle,fill,inner sep=2pt] {};
				\node(d) at (-1.5,-.8)[circle,fill,inner sep=2pt] {};
				
				\node(a') at (0,-4)[circle,fill,inner sep=2pt] {};
				\node(b') at (-2,.2-4)[circle,fill,inner sep=2pt] {};
				\node(c') at (-1.2,1.7-4)[circle,fill,inner sep=2pt] {};
				\node(d') at (-1.5,-.8-4)[circle,fill,inner sep=2pt] {};
				
				\node(a'') at (3,0)[circle,fill,inner sep=2pt] {};
				\node(b'') at (3+2,.2)[circle,fill,inner sep=2pt] {};
				\node(c'') at (3+1.2,1.7)[circle,fill,inner sep=2pt] {};
				\node(d'') at (3+1.5,-.8)[circle,fill,inner sep=2pt] {};
				
				\node(a''') at (3,-4)[circle,fill,inner sep=2pt] {};
				\node(b''') at (3+2,.2-4)[circle,fill,inner sep=2pt] {};
				\node(c''') at (3+1.2,1.7-4)[circle,fill,inner sep=2pt] {};
				\node(d''') at (3+1.5,-.8-4)[circle,fill,inner sep=2pt] {};

				\draw [very thick](a) to (b) to (c) to (d) to (a) to (c);
				\draw[very thick] (b) to (d);
				
				\draw [very thick](a') to (b') to (c') to (d') to (a') to (c');
				\draw[very thick] (b') to (d');
				
				\draw [very thick](a'') to (b'') to (c'') to (d'') to (a'') to (c'');
				\draw[very thick] (b'') to (d'');
				
				\draw [very thick](a''') to (b''') to (c''') to (d''') to (a''') to (c''');
				\draw[very thick] (b''') to (d''');
				
				\draw[very thick](a) to (a'');
				\draw[very thick](a') to (a''');
				\draw[very thick](b) to (b');
				\draw[very thick](c) to (c');
				\draw[very thick](d) to (d');
				\draw[very thick](b'') to (b''');
				\draw[very thick](c'') to (c''');
				\draw[very thick](d'') to (d''');

			\end{tikzpicture}
		\end{center}
		\caption{An orientable GKM graph that is not $3$-edge connected}
		\label{fig:not3connected}
	\end{figure}
	The GKM graph $\Gamma$ is orientable and $2$-edge connected, but not $3$-edge connected.	
	Note that for $n=2$ the orientability covering (see Section \ref{sec:orientationcovering}) of Example \ref{ex:not2connected} is of this form.
\end{ex}

\begin{rem}\label{rem:Luoassumptions}
	Example \ref{ex:not3connected} shows that Luo's connectivity results \cite[Corollary 5.4 and 5.6]{Luo}, proven for certain Hamiltonian GKM graphs, do not generalize to arbitrary (orientable) GKM graphs. Motivated by Hamiltonian geometry, Luo considers in these theorems graphs that are immersed into $\RR^2$, with edge labels given by the respective slopes. In addition, he only considers graphs that are \emph{in general position}, which for him means that the image of no three vertices in $\RR^2$ are colinear -- this condition is not to be confused with the standard notion of being in general position (see e.g.~\cite{AyzenbergMasuda}) This global condition is restrictive, and does not seem to have an evident geometric meaning. For instance, Figure \ref{fig:notingenpos} is an immersion of Example \ref{ex:not3connected} above for $n=2$ in a way compatible with the slopes, but which is not in general position.
	\begin{figure}[H]
		\begin{center}
			\begin{tikzpicture}
			
			\node(a) at (0,0)[circle,fill,inner sep=2pt] {};
			\node(b) at (1.5,-1.5)[circle,fill,inner sep=2pt] {};
			\node(c) at (1.5,1.5)[circle,fill,inner sep=2pt] {};
			\node(d) at (2.25,.75)[circle,fill,inner sep=2pt] {};
			\node(e) at (2.25,-.75)[circle,fill,inner sep=2pt] {};
			\node(f) at (3,0)[circle,fill,inner sep=2pt] {};
			\node(g) at (5,0)[circle,fill,inner sep=2pt] {};
			\node(h) at (8,0)[circle,fill,inner sep=2pt] {};
			\node(i) at (6.5,-1.5)[circle,fill,inner sep=2pt] {};
			\node(j) at (6.5,1.5)[circle,fill,inner sep=2pt] {};
			\node(k) at (5.75,-.75)[circle,fill,inner sep=2pt] {};
			\node(l) at (5.75,.75)[circle,fill,inner sep=2pt] {};

			\draw[very thick](a) to (b) to (c) to (a) to (f) to (d) to (e) to (k) to (l) to (g) to (h) to (i) to (j) to (c);
			\draw[very thick] (h) to (j);
			\draw[very thick] (b) to (i);
			\draw[very thick] (e) to (f);
			\draw[very thick] (h) to (j);
			\draw[very thick] (d) to (l);			
			\draw[very thick] (g) to (k);

			\end{tikzpicture}
		\end{center}
		\caption{An immersion, not in general position}
		\label{fig:notingenpos}
	\end{figure}
\end{rem}

\bibliographystyle{acm}
%\printbibliography
%\bibliography{/home/pako/.config/TeXFiles/master.bib}
\bibliography{orientability}

\end{document}